\def\bs{\boldsymbol}
\def\tilde{\widetilde}
\def\DTV{\mathop{d_{\rm TV}}}
\def\esssup{\mathop{\mathrm{ess\,sup}}}
\def\AA{\mathcal{A}}
\def\LL{\mathcal{L}}
\def\XX{\mathcal{X}}
\def\Bin{\mathop{\mathrm{Bin}}\nolimits}
\def\Hyp{\mathop{\mathrm{Hyp}}\nolimits}
\def\Poiss{\mathop{\mathrm{Poiss}}\nolimits}
\def\Ex{\mathop{\mathrm{I\!E}}\nolimits}
\def\Pr{\mathop{\mathrm{I\!P}}\nolimits}
\def\Var{\mathop{\mathrm{Var}}\nolimits}
\newcommand{\R}{\mathbb{R}}
\newtheoremstyle{localthm}
	{5pt} % space above
	{5pt} % space below
	{\sl} % Body font
	{} % Indent amount
	{\bf} % Theorem head font
	{{\rm.}} % Punctuation after theorem head
	{.7em} % Space after theorem head
	{} % Theorem head spec ?
\theoremstyle{localthm}
\newtheorem{Theorem}{Theorem}
\newtheorem{Proposition}[Theorem]{Proposition}
\newtheorem{Corollary}[Theorem]{Corollary}
\newtheorem{Lemma}[Theorem]{Lemma}
\newtheoremstyle{localrem}
	{5pt} % space above
	{5pt} % space below
	{\rm} % Body font
	{} % Indent amount
	{\bf} % Theorem head font
	{{\rm.}} % Punctuation after theorem head
	{.7em} % Space after theorem head
	{} % Theorem head spec ?
\theoremstyle{localrem}
\newtheorem{Example}[Theorem]{Example}
\begin{document}

\addtolength{\baselineskip}{0.32\baselineskip}

\title{\bf Bounding distributional errors via density ratios}
\author{Lutz D\"{u}mbgen (University of Bern){\footnote{Research supported by Swiss National Science Foundation}}, \\
	Richard J.\ Samworth (University of Cambridge)\footnote{Research supported by an Engineering and Physical Sciences 
Research Council fellowship}
	\ and\\
	Jon A.\ Wellner (University of Washington, Seattle){\footnote{Research supported in part by NSF Grant DMS-1566514 and NI-AID Grant 2R01 AI291968-04}}
}
\date{February 4, 2020}
\maketitle

\begin{abstract}
We present some new and explicit error bounds for the approximation of distributions. 
The approximation error is quantified by the maximal density ratio of the distribution 
$Q$ to be approximated and its proxy $P$. This non-symmetric measure is more 
informative than and implies bounds for the total variation distance.

Explicit approximation problems include, among others, hypergeometric by 
binomial distributions, binomial by Poisson distributions, and beta by gamma distributions.
In many cases we provide both upper and (matching) lower bounds.
\end{abstract}

\paragraph{Key words:} Binomial distribution, hypergeometric distribution, Poisson approximation, relative errors, total variation distance.

%=======================
\section{Introduction}
\label{sec:Introduction}
%=======================

The aim of this work is to provide new inequalities for the approximation of probability distributions. A traditional measure of discrepancy between distributions $P, Q$ on a space $(\XX,\AA)$ is their total variation distance
\[
	\DTV(Q,P) \ := \ \sup_{A \in \AA} \, \bigl| Q(A) - P(A) \bigr| .
\]
Alternatively we consider the maximal ratio
\[
	\rho(Q,P) \ := \ \sup_{A \in \AA} \, \frac{Q(A)}{P(A)},
\]
with the conventions $0/0 := 0$ and $a/0 := \infty$ for $a > 0$. Obviously $\rho(Q,P) \ge 1$ because $Q(\XX) = P(\XX) = 1$. While $\DTV(\cdot,\cdot)$ is a standard and strong metric on the space of all probability measures on $(\XX,\AA)$, the maximal ratio $\rho(Q,P)$ is particularly important in situations in which a distribution $Q$ is approximated by a distribution $P$. When $\rho(Q,P) < \infty$, we know that
\[
	Q(A) \ \le \ \rho(Q,P) P(A)
\]
for arbitrary events $A$, no matter how small $P(A)$ is, whereas total variation distance gives only the additive bounds $P(A) \pm \DTV(Q,P)$.

Explicit values or bounds for $\rho(Q,P)$ are obtained via density ratios. From now on let $P$ and $Q$ have densities $f$ and $g$, respectively, with respect to some measure $\mu$ on $(\XX,\AA)$. Then
\begin{equation}
\label{eq:rho.via.LR}
	\rho(Q,P) \ = \ \esssup_{x \in \XX} \, \frac{g(x)}{f(x)} .
\end{equation}
The ratio measure $\rho(Q,P)$ plays an important role in acceptance-rejection sampling \citep{vonNeumann_1951}: Suppose that $\rho(Q,P) \le C < \infty$. Let $X_1, X_2, X_3, \ldots$ and $U_1, U_2, U_3, \ldots$ be independent random variables where $X_i \sim P$ and $U_i \sim \mathrm{Unif}[0,1]$. Now let $\tau_1 < \tau_2 < \tau_3 < \cdots$ denote all indices $i \in \mathbb{N}$ such that $U_i \le C^{-1} g(X_i)/f(X_i)$. Then the random variables $Y_j := X_{\tau_j}$ and $W_j := \tau_j - \tau_{j-1}$ ($j \in \mathbb{N}$, $\tau_0 := 0$) are independent with $Y_j \sim Q$ and $W_j \sim \mathrm{Geom}(1/C)$.

As soon as we have a finite bound for $\rho(Q,P)$, we can bound total variation distance or other measures of discrepancy. The general result is as follows:

\begin{Proposition}
\label{prop:rho.and.divergences}
Suppose that $g/f \le \rho$ for some number $\rho \in [1,\infty)$.

\noindent
\textbf{(a)} For any non-decreasing function $\psi : [0,\infty) \to \R$ with $\psi(1) = 0$,
\[
	\int \psi(g/f) \, dQ \ \le \ Q(\{g > f\}) \psi(\rho) .
\]

\noindent
\textbf{(b)} For any convex function $\psi : [0,\infty) \to \R$,
\[
	\int \psi(g/f) \, dP \ \le \ \psi(0) + \frac{\psi(\rho) - \psi(0)}{\rho} .
\]

\noindent
Both inequalities are equalities if $g/f$ takes only values in $\{0,\rho\}$.
\end{Proposition}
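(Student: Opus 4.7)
The proof splits naturally into the two independent parts (a) and (b), and the equality claim is an easy consequence of the arguments for both.

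For part (a), the plan is to partition $\XX$ according to the sign of $g/f - 1$. On $\{g \le f\}$, monotonicity of $\psi$ together with $\psi(1) = 0$ gives $\psi(g/f) \le 0$. On $\{g > f\}$, the assumed bound $g/f \le \rho$ and monotonicity give $\psi(g/f) \le \psi(\rho)$; and monotonicity also forces $\psi(\rho) \ge \psi(1) = 0$, so this upper bound is nonnegative. Integrating these two pointwise bounds over the respective sets against $Q$ immediately yields
\[
	\int \psi(g/f)\,dQ \;\le\; 0 + \psi(\rho) Q(\{g > f\}),
\]
as claimed.

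For part (b), the key observation is that since $g/f$ takes values in $[0,\rho]$, convexity of $\psi$ on that interval gives the affine majorant
\[
	\psi(t) \;\le\; \psi(0) + \frac{t}{\rho}\bigl(\psi(\rho) - \psi(0)\bigr)
	\qquad \text{for every } t \in [0,\rho],
\]
obtained by writing $t = (1 - t/\rho)\cdot 0 + (t/\rho)\cdot \rho$. Substituting $t = g(x)/f(x)$ and integrating with respect to $P = f\,d\mu$ turns the linear term into $\int (g/f)\,dP = \int g\,d\mu = 1$, and one reads off the stated bound. The only thing to verify is that $g/f \le \rho$ holds $P$-almost surely, which is immediate from the hypothesis $g/f \le \rho$ (interpreted $\mu$-a.e., hence $P$-a.e.).

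Finally, if $g/f$ takes only values in $\{0,\rho\}$, then in the argument for (a) the inequality $\psi(g/f) \le \psi(\rho)$ on $\{g > f\}$ becomes an equality (on that set $g/f = \rho$) and the contribution of $\{g \le f\}$ under $Q$ is zero because $Q(\{g = 0\}) = 0$; in the argument for (b), the affine majorant coincides with $\psi$ at the two endpoints $0$ and $\rho$, so the pointwise inequality is an equality $P$-a.s. No step looks genuinely hard: the only mild subtlety is making sure the null-set conventions align so that "$\esssup g/f \le \rho$" can be used pointwise in the integrals, and that the equality claim in (a) really does hold without any integrability hypothesis on $\psi$ beyond what is implicitly required for the integral to be defined.
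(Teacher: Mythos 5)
Your proposal is correct and follows essentially the same route as the paper's own proof: for (a) the split of the integral over $\{g \le f\}$ and $\{g > f\}$ using monotonicity and $\psi(1) = 0$, and for (b) the affine chord majorant from convexity combined with $\int (g/f)\,dP = 1$, with the equality claim handled by the same endpoint observations. Your closing remark on null-set conventions (that $g/f \le \rho < \infty$ forces $g = 0$ $\mu$-a.e.\ on $\{f = 0\}$, so the linear term integrates to exactly $1$) is the one point the paper glosses over silently, so including it is a mild improvement rather than a deviation.
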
 

Under the assumptions of Proposition~\ref{prop:rho.and.divergences}, the following inequalities hold true, with equality in case of $g/f \in \{0,\rho\}$:\\[0.5ex]
\textbf{Total variation}: With $\psi(t) := (1 - t^{-1})_+$, part~(a) leads to
\begin{equation}
\label{ineq:rho.and.DTV}
	\DTV(Q,P) \ \le \ Q(\{g > f\}) (1 - \rho^{-1}) .
\end{equation}
\textbf{Kullback-Leibler divergence}: With $\psi(t) := \log t$, part~(a) yields
\[
	\int \log(g/f) \, dQ \ \le \ Q(\{g > f\}) \log \rho .
\]
\textbf{Hellinger distance}: With $\psi(t) := 2^{-1} \bigl( \sqrt{t} - 1 \bigr)^2$, part~(b) leads to
\[
	\frac{1}{2} \int \bigl( \sqrt{f} - \sqrt{g} \bigr)^2 \, d\mu
	\ \le \ 1 - \rho^{-1/2} .
\]
\textbf{Pearson $\chi^2$ divergence}: With $\psi(t) := (t - 1)^2$, part~(b) yields
\[
	\int (g/f - 1)^2 \, dP
	\ \le \ \rho - 1 .
\]

Inequality~\eqref{ineq:rho.and.DTV} implies that $\DTV(Q,P) \le 1 - \rho(Q,P)^{-1}$, and the latter quantity is easily seen to be the \textsl{mixture index of fit} introduced by \cite{Rudas_Clogg_Lindsay_1994},
\begin{align*}
	\pi^*(P,Q) \
	&:= \ \min \bigl\{ \pi \in [0,1] : P = (1 - \pi) Q + \pi R
		\ \text{for some distribution} \ R \bigr\} \\
	&\phantom{:}= \ \min \bigl\{ \pi \in [0,1] :
		P \ge (1 - \pi) Q \ \text{on} \ \AA \bigr\} .
\end{align*}

The remainder of this paper is organized as follows: In Section~\ref{sec:HypBin} we present an explicit inequality for $\rho(Q,P)$ with $Q$ being a hypergeometric and $P$ being an approximating binomial distribution. Our result complements results of \cite{Diaconis_Freedman_1980}, \cite{Ehm_1991} and \cite{Holmes_2004} for $\DTV(Q,P)$.

In Section~\ref{sec:BinPoiss} we first consider the case of $Q$ being a binomial distribution and $P$ being the Poisson distribution with the same mean. The corresponding ratio measure $\rho(Q,P)$ has been analyzed previously by \cite{Christensen_Fischer_Kvols_1995} and \cite{Antonelli_Regoli_2005}. Our new explicit bounds bridge the gap between these two works. As a by-product we obtain explicit bounds for $\DTV(Q,P)$ which are comparable to well-known bounds from the literature. All these bounds carry over to multinomial distributions, to be approximated by a product of Poisson distributions. In particular, we improve and generalize approximation bounds by \cite{Diaconis_Freedman_1987}. Indeed, at several places we use sufficiency arguments similarly to the latter authors to reduce multivariate approximation problems to univariate ones. Section~\ref{sec:Further.examples} presents several further examples, most of which are based on approximating beta by gamma distributions.

Most proofs are deferred to Section~\ref{sec:Proofs}. In particular, we provide a slightly strengthened version of the Stirling--Robbins approximation of factorials \citep{Robbins_1955} and some properties of the log-gamma function. This part is potentially of independent interest. As notation used throughout, we write $[a]_0 := 1$ and $[a]_m := \prod_{i=0}^{m-1} (a - i)$ for real numbers $a$ and integers $m \ge 1$.

%===============================================================
\section{Binomial approximation of hypergeometric distributions}
\label{sec:HypBin}
%===============================================================

\paragraph{Sampling from a finite population.}
First we revisit a result of \cite{Freedman_1977} concerning sampling with and without replacement. For integers $1 \le n \le N$ let $\XX = \{1,\ldots,N\}^n$, the set of all samples of size $n$ drawn with replacement from $\{1,\ldots,N\}$. The uniform distribution $P$ on $\XX$ has weights
\[
	P(\{x\}) = 1/N^n
\]
for $x = (x_1,\ldots,x_n) \in \XX$. When sampling without replacement, we consider the set $\XX_*$ of all $x$ with all components different, and the distribution $Q$ with weights
\[
	Q(\{x\}) = \begin{cases}
		1/[N]_n & \text{if} \ x \in \XX_* , \\
		0       & \text{if} \ x \in \XX\setminus \XX_* .
	\end{cases}
\]
Consequently, $dQ/dP = N^n/[N]_n$ on $\XX_*$ and $dQ/dP = 0$ on $\XX\setminus\XX_*$, so Proposition~\ref{prop:rho.and.divergences}~(a) with $\psi(t) := (1 - t^{-1})_+$ implies that
\begin{equation}
\label{eq:rho.DTV.sampling}
	\rho(Q,P) = N^n/[N]_n
	\quad\text{and}\quad
	\DTV(Q,P) = 1 - \rho(Q,P)^{-1} = 1 - [N]_n/N^n .
\end{equation}
\cite{Freedman_1977} showed that
\begin{equation}
\label{ineq:Freedman.sampling}
	1 - \exp \Bigl( - \frac{n(n-1)}{2 N} \Bigr)
	\le \DTV(Q,P) \le \frac{n(n-1)}{2N} .
\end{equation}
Here are two new bounds for $\rho(Q,P)$ which we will prove in Section~\ref{sec:Proofs}. The lower bound in the following display follows from Freedman's proof of the lower bound in (\ref{ineq:Freedman.sampling}), while the upper bound is new.
\begin{equation}
\label{ineq:rho.sampling}
	\frac{n(n-1)}{2N}
	\ \le \ \log \rho(Q,P)
	\ \le \ - \frac{n}{2} \log \Bigl( 1 - \frac{n-1}{N} \Bigr) .
\end{equation}
From \eqref{eq:rho.DTV.sampling} and \eqref{ineq:Freedman.sampling} one would get the upper bound $- \log \bigl( 1 - n(n-1)/(2N) \bigr)$ with the convention that $\log(t) := -\infty$ for $t \le 0$. For $n = 2$ this coincides with the upper bound in \eqref{ineq:rho.sampling}, for $n \ge 3$ it is strictly larger.

\paragraph{Hypergeometric and binomial distributions.}
Now recall the definition of the hypergeometric distribution: Consider an urn with $N$ balls, $L$ of them being black and $N - L$ being white. Now we draw $n$ balls at random and define $X$ to be the number of black balls in this sample. When sampling with replacement, $X$ has the binomial distribution $\Bin(n,L/N)$, and when sampling without replacement ($n \le N$), $X$ has the hypergeometric distribution $\Hyp(N,L,n)$. Intuitively one would guess that the difference between $\Bin(n,L/N)$ and $\Hyp(N,L,n)$ is small when $n \ll N$. Note that when \nocite{Freedman_1977}{Freedman's (1977)} result is applied to a particular function, e.g.\ the number of black balls, the resulting bound is suboptimal because it involves $n(n-1)/N$ rather than $n/N$. Indeed, \cite{Diaconis_Freedman_1980} showed that
\[
	\DTV \bigl( \Bin(n,L/N), \Hyp(N,L,n) \bigr)
	\ \le \ 2 \frac{n}{N} .
\]
Stronger bounds have been obtained by means of the Chen--Stein method. \cite{Ehm_1991} showed that with $p := L/N$,
\begin{align}
\nonumber
	\DTV \bigl( & \Hyp (N,L,n),\Bin (n,L/N) \bigr) \\
\label{ineq:Ehm}
	&\le \ \frac{n}{n+1} \left( 1 - p^{n+1} - (1 - p)^{n+1} \right) \frac{n-1}{N-1}
	\quad\text{for} \ 1 \le n \le \min\{L,N-L\} ,
\end{align}
while \cite{Holmes_2004} proved that
\begin{equation}
\label{ineq:Holmes}
	\DTV \bigl( \Hyp (N,L,n),\Bin (n,L/N) \bigr)
	\ \le \ \frac{n-1}{N-1} .
\end{equation}

Our first main result shows that for fixed parameters $N$ and $n \le N/2+1$, the ratio measure $\rho \bigl( \Hyp(N,L,n), \Bin(n,L/N) \bigr)$ is maximized by $L = 1$ (and $L = N-1$):

\begin{Theorem}
\label{thm:HypBin}
For integers $N,L,n$ with $1 \le n \le N$, $n-1 \le N/2$ and $L \in \{0,1,\ldots,N\}$,
\begin{align*}
	\rho \bigl( \Hyp(N,L,n), \Bin(n,L/N) \bigr) \
	&\le \ \rho \bigl( \Hyp(N,1,n), \Bin(n,1/N) \bigr) \\
	&= \ \Bigl( 1 - \frac{1}{N} \Bigr)^{-(n-1)} \\
	&\le \ \Bigl( 1 - \frac{n-1}{N} \Bigr)^{-1} .
\end{align*}
Moreover,
\begin{align*}
	\DTV \bigl( \Hyp(N,L,n), \Bin(n,L/N) \bigr) \ 
	&\le \left ( 1 - \frac{[L]_n}{[N]_n} - \frac{[N-L]_n}{[N]_n} \right)
		\left( 1 - \Bigl(1 - \frac{1}{N}\Bigr)^{n-1} \right )\\ 
	&\le \left ( 1 - \frac{[L]_n}{[N]_n} - \frac{[N-L]_n}{[N]_n} \right )\frac{n-1}{N} .
\end{align*}
\end{Theorem}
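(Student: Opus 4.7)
Denote $Q = \Hyp(N,L,n)$ and $P = \Bin(n, L/N)$. A direct computation gives the pointwise density ratio
\[
r_L(k) \;:=\; \frac{Q(\{k\})}{P(\{k\})} \;=\; \frac{N^n\,[L]_k\,[N-L]_{n-k}}{[N]_n\,L^k\,(N-L)^{n-k}},
\]
so that $\rho(Q,P) = \max_k r_L(k)$. The cases $L \in \{0, N\}$ are trivial, and the swap $(L,k) \mapsto (N-L, n-k)$ gives $r_L(k) = r_{N-L}(n-k)$, so in what follows I take $1 \le L \le N-1$. For $L = 1$ the support of $Q$ is $\{0,1\}$, and a short calculation yields
\[
r_1(1) \;=\; \frac{N^{n-1}}{(N-1)^{n-1}} \;=\; \Bigl(1 - \tfrac{1}{N}\Bigr)^{-(n-1)}, \qquad r_1(0) \;=\; r_1(1) \cdot \frac{N-n}{N-1} \;\le\; r_1(1),
\]
which establishes the middle equality of the theorem.

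\textbf{Main bound on $\rho$.} Using $[N]_n = N\,[N-1]_{n-1}$, the inequality $r_L(k) \le (1 - 1/N)^{-(n-1)}$ is equivalent to
\[
\prod_{i=1}^{k-1}\!\Bigl(1 - \tfrac{i}{L}\Bigr) \prod_{j=1}^{n-k-1}\!\Bigl(1 - \tfrac{j}{N-L}\Bigr) \;\le\; \prod_{m=1}^{n-2}\!\Bigl(1 - \tfrac{m}{N-1}\Bigr). \qquad (\dagger)
\]
Writing $D(m, j) := [m]_j/m^j$, I plan to prove $(\dagger)$ by induction on $n$, exploiting the recursion $D(m,j) = D(m, j-1)(1 - (j-1)/m)$. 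The two one-step reductions each furnish an upper bound of the form $D(N-1, n-2)(1 - (k-1)/L)$ or $D(N-1, n-2)(1 - (n-k-1)/(N-L))$ on the left-hand side of $(\dagger)$, and a judicious convex combination should produce the factor $1 - (n-2)/(N-1)$ required to match $D(N-1, n-1)$; the hypothesis $n-1 \le N/2$ is what provides the necessary slack. The final inequality $(1 - 1/N)^{-(n-1)} \le (1 - (n-1)/N)^{-1}$ is Bernoulli.

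\textbf{Bound on $\DTV$.} Apply Proposition~\ref{prop:rho.and.divergences}(a) with $\psi(t) = (1 - 1/t)_+$ to obtain $\DTV(Q,P) \le Q(\{r_L > 1\})\,(1 - \rho(Q,P)^{-1})$, exactly as in \eqref{ineq:rho.and.DTV}. The key observation is that $r_L(0) \le 1$ and $r_L(n) \le 1$, via the telescoping factorizations
\[
r_L(0) \;=\; \prod_{i=0}^{n-1} \frac{N(N-L-i)}{(N-i)(N-L)}, \qquad r_L(n) \;=\; \prod_{i=0}^{n-1} \frac{N(L-i)}{(N-i)\,L},
\]
every factor of which is $\le 1$ (for the first, $N(N-L-i) \le (N-i)(N-L)$ reduces to $0 \le iL$; for the second, $N(L-i)\le (N-i)L$ reduces to $L \le N$). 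Hence $\{r_L > 1\} \subseteq \{1, \ldots, n-1\}$ and
\[
Q(\{r_L > 1\}) \;\le\; 1 - Q(\{0\}) - Q(\{n\}) \;=\; 1 - \tfrac{[N-L]_n}{[N]_n} - \tfrac{[L]_n}{[N]_n}.
\]
Combined with $1 - \rho^{-1} \le 1 - (1 - 1/N)^{n-1}$ (from the first part), this gives the first $\DTV$ bound; a further application of Bernoulli, $1 - (1 - 1/N)^{n-1} \le (n-1)/N$, yields the second.

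\textbf{Main obstacle.} The technical heart of the proof is $(\dagger)$. A naive factor-by-factor attack -- bounding $D(L, k) \le D(N-1, k)$ and $D(N-L, n-k) \le D(N-1, n-k)$ separately and then comparing with $D(N-1, n-1)$ -- fails, because $D(N-1, k)\,D(N-1, n-k) > D(N-1, n-1)$ in general; the interaction between the $L$ and $N-L$ factors on the left of $(\dagger)$ must be used. The Cauchy--Schwarz identity $k(k-1)/L + (n-k)(n-k-1)/(N-L) \ge (n-1)(n-2)/N$ points in the right direction at leading order, and I expect the Stirling--Robbins and log-gamma refinements promised in Section~\ref{sec:Proofs} to furnish the control over the residual error terms.
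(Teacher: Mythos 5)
Your reduction of the first assertion to the inequality $(\dagger)$ is correct, as are the symmetry reduction to $L \le N/2$, the computation establishing the middle equality at $L=1$, and the entire $\DTV$ part: given the bound on $\rho$, your telescoping verification that $r_L(0) \le 1$ and $r_L(n) \le 1$ (a step the paper merely asserts as $Q(\{0\}) \le P(\{0\})$, $Q(\{n\}) \le P(\{n\})$) correctly yields $Q(\{g>f\}) \le 1 - [L]_n/[N]_n - [N-L]_n/[N]_n$, and Proposition~\ref{prop:rho.and.divergences}(a) plus Bernoulli finish that part exactly as in the paper.

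However, the central claim --- inequality $(\dagger)$, which is the whole content of the theorem --- is not proved. You offer only a plan ("I plan to prove $(\dagger)$ by induction\dots a judicious convex combination should produce\dots I expect the Stirling--Robbins\dots refinements to furnish the control"), and you yourself concede that the factorwise attack fails. The leading-order Cauchy--Schwarz observation $k(k-1)/L + (n-k)(n-k-1)/(N-L) \ge (n-1)(n-2)/N$ is true but handles only the first-order terms of $\log r_L(k)$, whereas the bound is tight to second order: equality essentially occurs at $L=1$, $k=1$, so the second-order residuals cannot be discarded. This is precisely where the paper's proof does its real work: it first identifies the maximizing index $k_{N,L,n} = \lceil (n-1)L/N \rceil$ via the ratio $r(k+1)/r(k)$; disposes of the regime $L \le N/(n-1)$ (where $k=1$) by a direct monotonicity argument in $L$; treats $n=4$ by hand; and for $n \ge 5$ applies Corollary~\ref{cor:log.Gamma.increments} together with the log-ratio inequalities \eqref{ineq:log.ratio.1}--\eqref{ineq:log.ratio.3}, reducing matters to the residual bound \eqref{eq:Ooff}, $g(L) \le \delta/(7\Delta)$, where $g$ involves the fractional part $\gamma$ of $(n-1)L/N$; that bound in turn needs a case split ($k \ge 3$ versus $k = 2$), the latter resolved by the quadratic-in-$\gamma$ estimate \eqref{eq:Ooff4}. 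None of this delicate bookkeeping --- where the hypothesis $n - 1 \le N/2$ actually enters, through $\delta \le 1/2$ in the final convexity step --- appears in your proposal, so the proof as written has a genuine gap at its core, even though the scaffolding around it is sound.
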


\paragraph{Remarks.} Note that our bounds for $\DTV \bigl(\Hyp(N,L,n), \Bin(n,L/N) \bigr)$ are slightly better than the bound \eqref{ineq:Holmes} of \cite{Holmes_2004}. If we fix $n$ and let $L,N \to \infty$ such that $L/N \to p \in (0,1)$, then our bounds are equal to
\[
	(1 + o(1)) (1 - p^n - q^n) \frac{n-1}{N}
\]
and thus similar to the bound \eqref{ineq:Ehm} of \cite{Ehm_1991}. If we fix $L$ and let $n,N \to \infty$ such that $n/N \to \gamma \in (0,1)$, then our two bounds converge to
\[
	(1 - \gamma^L) (1 - e^{-\gamma}) \ \le \ (1 - \gamma^L) \gamma ,
\]
whereas the upper bound in \eqref{ineq:Holmes} tends to $\gamma$, and \eqref{ineq:Ehm} is not applicable.

%===============================================================
\section{Poisson approximations}
\label{sec:BinPoiss}
%===============================================================

%----------------------------------
\subsection{Binomial distributions}
%----------------------------------

It is well-known that for $n \in \mathbb{N}$ and $p \in [0,1]$, the binomial distribution $\Bin(n,p)$ may be approximated by the Poisson distribution $\Poiss(np)$ if $p$ is small. Explicit bounds for the approximation error have been developed in the more general setting of sums of independent but not necessarily identically distributed Bernoulli random variables by various authors. \cite{Hodges_LeCam_1960} introduced a coupling method which was refined by \cite{Serfling_1975} and implies the inequality
\[
	\DTV \bigl( \Bin(n,p),\Poiss(np) \bigr) \ \le \ np (1 - e^{-p}) \ \le \ n p^2 .
\]
By direct calculations involving density ratios, \cite{Reiss_1993} showed that
\[
	\DTV \bigl( \Bin(n,p),\Poiss(np) \bigr) \ \le \ p .
\]
Finally, by means of the Chen--Stein method, \cite{Barbour_Hall_1984} derived the remarkable bound
\begin{equation}
\label{ineq:Barbour-Hall}
	\DTV \bigl( \Bin(n,p), \Poiss(np) \bigr)
	\ \le \ (1 - e^{-np}) p .
\end{equation}

Concerning the ratio measure $\rho \bigl( \Bin(n,p), \Poiss(np) \bigr)$, \cite{Christensen_Fischer_Kvols_1995} showed that
\[
	\Lambda(p) \ := \ \max_{n \ge 1} \, \log \rho \bigl( \Bin(n,p), \Poiss(n,p) \bigr)
\]
is a convex, piecewise linear function of $p \in [0,1)$ with $\lim_{p\to 1} \Lambda(p) = \infty$ and
\begin{equation}
\label{eq:CFK1}
	\Lambda(p) \ = \ p
	\quad \text{for} \ 0 \le p \le \log(2) .
\end{equation}
A close inspection of their proof reveals that $\Lambda(p)$ is the maximum of the log-ratio measure $\log \rho \bigl( \Bin(n,p), \Poiss(n,p) \bigr)$ over all integers $n \le 1/(1 - p)$, so the bound $\Lambda(p)$ is probably rather conservative for large sample sizes $n$. Indeed, it follows from the results of \cite{Antonelli_Regoli_2005} that for any fixed $p \in (0,1)$,
\begin{equation}
\label{eq:Antonelli_Regoli}
	\lim_{n \to \infty} \log \rho \bigl( \Bin(n,p), \Poiss(np) \bigr)
	\ = \ - \log(1 - p)/2
\end{equation}
which is substantially smaller than $\Lambda(p)$, at least for small values $p$. By means of elementary calculations and an appropriate version of Stirling's formula, we shall prove the following bounds:

\begin{Theorem}
\label{thm:BinPoiss1}
For arbitrary $n \in \mathbb{N}$,
\[
	\Lambda_n(p) \ := \ \log \rho \bigl( \Bin(n,p), \Poiss(np) \bigr)
\]
is a continuous and strictly increasing function of $p \in [0,1)$, satisfying $\Lambda_n(0) = 0$ and
\[
	\Lambda_n(p) 
	\ < \ \begin{cases}
		- \log(1 - p) \\[0.5ex]
		- \log(1 - \lceil np\rceil/n) / 2
	\end{cases}
\]
for $0 < p < 1$. More precisely, with $k := \lceil np\rceil$,
\begin{equation}
\label{ineq:BinPoiss1}
	\Lambda_n(p) + \log(1 - p)/2
	\begin{cases}
		\displaystyle
		< \ - \frac{k - 1}{12 n (n - k + 1)}
			+ \frac{1}{8(n - k) + 6} , \\[2ex]
		\displaystyle
		> \ - \frac{k-1}{12 n (n - k + 1)}
			- \frac{1}{12 (n - k)(n - k + 1)} .
	\end{cases}
\end{equation}
\end{Theorem}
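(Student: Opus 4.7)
I would organize the proof into three stages.

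\emph{Setup and regularity.} A direct computation gives
$$r(x) := \frac{\Bin(n,p)(\{x\})}{\Poiss(np)(\{x\})} = \frac{n!}{(n-x)!\,n^x}(1-p)^{n-x}e^{np} \quad (x \in \{0,\ldots,n\}),$$
with $r(x)=0$ for $x>n$. Since $r(x+1)/r(x) = (n-x)/(n(1-p))$ exceeds $1$ iff $x<np$ and equals $1$ exactly when $np$ is an integer and $x=np$, the function $r$ is unimodal with maximum at $k := \lceil np\rceil$, so $\Lambda_n(p)=\log r(k)$ by \eqref{eq:rho.via.LR}. On each open interval $((k-1)/n, k/n)$ with $k$ fixed, $d\Lambda_n(p)/dp = (k-np)/(1-p) > 0$, and $r(k)=r(k+1)$ at each transition point $p=k/n$; together with $\Lambda_n(0)=\log r(0)=0$, this gives continuity and strict monotonicity on $[0,1)$.

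\emph{Central identity and the simple upper bounds.} I would apply the strengthened Stirling--Robbins formula from Section~\ref{sec:Proofs}, $\log\ell! = \tfrac12\log(2\pi\ell) + \ell\log\ell - \ell + \delta_\ell$ with $\delta_\ell \in (1/(12\ell+1), 1/(12\ell))$, to $\log n!$ and $\log(n-k)!$ inside $\log r(k)$. Setting $m := n-k$, $q := 1-p$, and $a := nq/m \ge 1$, the algebra collapses to the central identity
$$\Lambda_n(p) + \tfrac12\log q = \tfrac12\log a + m\bigl[\log a - (a-1)\bigr] + (\delta_n - \delta_m),$$
with $a-1 = (k-np)/m \in [0, 1/m)$. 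Since $\log a-(a-1)\le 0$ and $\delta_n-\delta_m<0$ for $k\ge 1$, rearranging using $\tfrac12\log a - \tfrac12\log q = -\tfrac12\log(1-k/n)$ yields $\Lambda_n(p) < -\tfrac12\log(1-k/n)$. For the companion bound $\Lambda_n(p) < -\log(1-p)$ I would work from the explicit $\log r(k) = \sum_{j=1}^{k-1}\log(1-j/n) + (n-k)\log(1-p) + np$, bounding each $\log(1-j/n)$ via its integral form and combining with the analogous integral bound for $\log(1-p)$ to close the inequality.

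\emph{The refined bounds \eqref{ineq:BinPoiss1}.} The non-$\delta$ part admits the integral representation
$$\tfrac12\log a + m\bigl[\log a - (a-1)\bigr] = \int_0^{a-1}\frac{1 - 2ms}{2(1+s)}\,ds,$$
verified by matching values at $a=1$ and differentiating in $a$. Since $a-1\in[0,1/m)$ and the integrand changes sign at $s=1/(2m)$, piecewise estimation of $1/(1+s)$ on $[0,1/(2m)]$ and $[1/(2m),a-1]$ yields sharp upper and lower bounds of order $1/(8m+6)$ and $-1/(12m(m+1))$ respectively. Combining with refined two-sided estimates of the Stirling correction $\delta_n-\delta_m = -\sum_{j=m}^{n-1}[(j+\tfrac12)\log(1+1/j)-1]$, which should deliver the shared term $-(k-1)/(12n(m+1))$, completes \eqref{ineq:BinPoiss1}. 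The main technical hurdle is precisely this final matching: extracting the clean form $-(k-1)/(12n(m+1))$ from the telescoping Stirling sum and pairing it tightly with the integral's boundary constants $1/(8m+6)$ and $-1/(12m(m+1))$ requires the tightened Stirling bounds at full strength and careful algebraic bookkeeping, since the maximum of the integral alone, $\tfrac12[(2m+1)\log(1+1/(2m))-1] \approx 1/(8m) - 1/(48m^2)$, exceeds $1/(8m+6)$ and only the subtraction of $\delta_m-\delta_n$ recovers the stated constants.
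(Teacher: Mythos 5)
Your setup, the identification of the mode $k=\lceil np\rceil$, the continuity and strict monotonicity argument, and your central identity $\Lambda_n(p)+\tfrac12\log q=\tfrac12\log a+m\bigl[\log a-(a-1)\bigr]+(\delta_n-\delta_m)$ are all correct --- I checked the identity, and it is an exact reparametrization of the paper's expansion via Corollary~\ref{cor:log.Gamma.increments} (your $\delta_\ell$ is the paper's $R(\ell)$). The two simple upper bounds follow as you say (the paper gets $\Lambda_n(p)<-\log(1-p)$ more slickly by comparing the derivative $(k-np)/(1-p)$ with $1/(1-p)$, which you already have in hand), and your route to the refined \emph{lower} bound is actually cleaner than the paper's: the integral $\int_0^{a-1}\frac{1-2ms}{2(1+s)}\,ds$ is nonnegative for $a-1\in[0,1/m]$, and the term bound $(j+\tfrac12)\log(1+1/j)-1<\frac{1}{12j(j+1)}$ telescopes to exactly $\frac{k}{12mn}=\frac{k-1}{12n(m+1)}+\frac{1}{12m(m+1)}$. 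Note, however, that the $-\frac{1}{12m(m+1)}$ term comes from this Stirling sum, not from ``piecewise estimation'' of the integral, whose minimum over the relevant range is $0$.

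The genuine gap sits precisely at the ``main technical hurdle'' you flag for the refined \emph{upper} bound: the Robbins-strength bounds $\frac{1}{12\ell+1}<\delta_\ell<\frac{1}{12\ell}$ that you invoke are provably insufficient, even at full strength. Concretely, take $n=2$, $k=1$, $m=1$: your integral maximum is $\tfrac12\bigl(3\log\tfrac32-1\bigr)\approx 0.10820$, while your quoted bounds give at best $\delta_2-\delta_1<\tfrac1{24}-\tfrac1{13}\approx-0.03526$, so your estimate is $\approx 0.07294$, exceeding the asserted bound $\frac{1}{8m+6}=\tfrac1{14}\approx 0.07143$ (the true value is $\approx 0.06848$, so the theorem holds, but your bookkeeping cannot certify it). A similar deficit occurs for $m=1$ and large $k$ even if you use the sharper per-term bound $(j+\tfrac12)\log(1+1/j)-1>\frac{1}{3(2j+1)^2}$ in your telescoped sum. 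The paper escapes this through a structurally different split: it expands the gamma ratio $\Gamma(n)/\Gamma(n-k+1)$, so its Stirling remainder $s_{k,n}=R(n)-R(n-k+1)$ vanishes identically when $k=1$ and is bounded by applying $w(t)>12^{-1}e^{-t/12}$ to the difference of the Binet integrals; crucially, it also retains the negative cubic term $-\frac{2a^3(x+b)}{3(2x+a)^3}$ of \eqref{ineq:log.ratio.1} and shows $\frac{m+1/2}{96(m+3/4)^3}\ge\frac{1}{144(m+1)^2}$ to absorb the residual slack. To salvage your grouping you would need higher-order two-sided expansions of both the integral maximum $(m+\tfrac12)\log(1+\tfrac{1}{2m})-\tfrac12$ and the telescoped Stirling sum, since the margin is of smaller order than the Robbins error at small $m$. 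Separately, your parametrization $a=n(1-p)/m$ breaks down at $k=n$ (where $m=0$ and $\log 0!$ has no Robbins expansion); the refined upper bound is still a nontrivial claim there, handled automatically by the paper's increment form $h(n)-h(n-k+1)$, and you would need to treat it by hand.
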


\paragraph{Remarks.}
Since $P(\{0\}) = e^{-np} \ge Q(\{0\}) = (1 - p)^n$, the first two upper bounds of Theorem~\ref{thm:BinPoiss1} and Proposition~\ref{prop:rho.and.divergences}~(a) lead to the inequalities
\[
	\DTV \bigl( \Bin(n,p), \Poiss(np) \bigr)
	\ < \ \bigl( 1 - (1 - p)^n \bigr) \cdot \begin{cases}
		p , \\
		\displaystyle
			1 - \sqrt{1 - \frac{\lceil np\rceil}{n}}
			\ \le \ \frac{\lceil np\rceil/n}{2 - \lceil np\rceil/n} ;
	\end{cases}
\]
see inequality~\eqref{eq:sqrt.delta} in Section~\ref{sec:Proofs}. For fixed $\lambda > 0$, the bound in \eqref{ineq:Barbour-Hall} may be rephrased as $n \DTV \bigl( \Bin(n,\lambda/n), \Poiss(\lambda) \bigr) \le (1 - e^{-\lambda}) \lambda$. Our bounds imply that
\[
	\limsup_{n\to \infty} \, \DTV \bigl( \Bin(n,\lambda/n), \Poiss(\lambda) \bigr)
	\ \le \ (1 - e^{-\lambda}) \min \bigl\{ \lambda, \lceil\lambda\rceil/2 \bigr\} ,
\]
and $\lceil\lambda\rceil/2 < \lambda$ for $\lambda > 1/2$. The refined inequalities imply that for any fixed $p_o \in (0,1)$,
\[
	\log \rho \bigl( \Bin(n,p), \Poiss(np) \bigr)
	\ = \ - \log(1 - p)/2 + O(n^{-1})
	\quad\text{uniformly in} \ p \le p_o .
\]

The proof of Theorem~\ref{thm:BinPoiss1} reveals that $\Lambda_n(p) = \log \rho \bigl( \Bin(n,p), \Poiss(np) \bigr)$ is concave in $p \in \bigl[ (k-1)/n, k/n \bigr]$ for each $k \in \{1,\ldots,n\}$. Figure~\ref{fig:BinPoiss40} illustrates this for $n = 40$. In the left panel one sees $\Lambda_n(p)$ (black) together with $\Lambda(p)$ (black dashed) and the simple upper bounds $- \log(1 - p)$ (green) and $- \log(1 - \lceil np\rceil/n)/2$ (blue). The right panel shows the quantities $\Lambda_n(p) + \log(1 - p)/2$ (black), i.e.\ the difference of $\Lambda_n(p)$ and the asymptotic bound $- \log(1 - p)/2$ of \cite{Antonelli_Regoli_2005}, together with the upper bound $- \log(1 - \lceil np\rceil/n)/2 + \log(1 - p)/2$ (blue) and the two bounds in \eqref{ineq:BinPoiss1} (red and orange).

\begin{figure}
\includegraphics[width=0.49\textwidth]{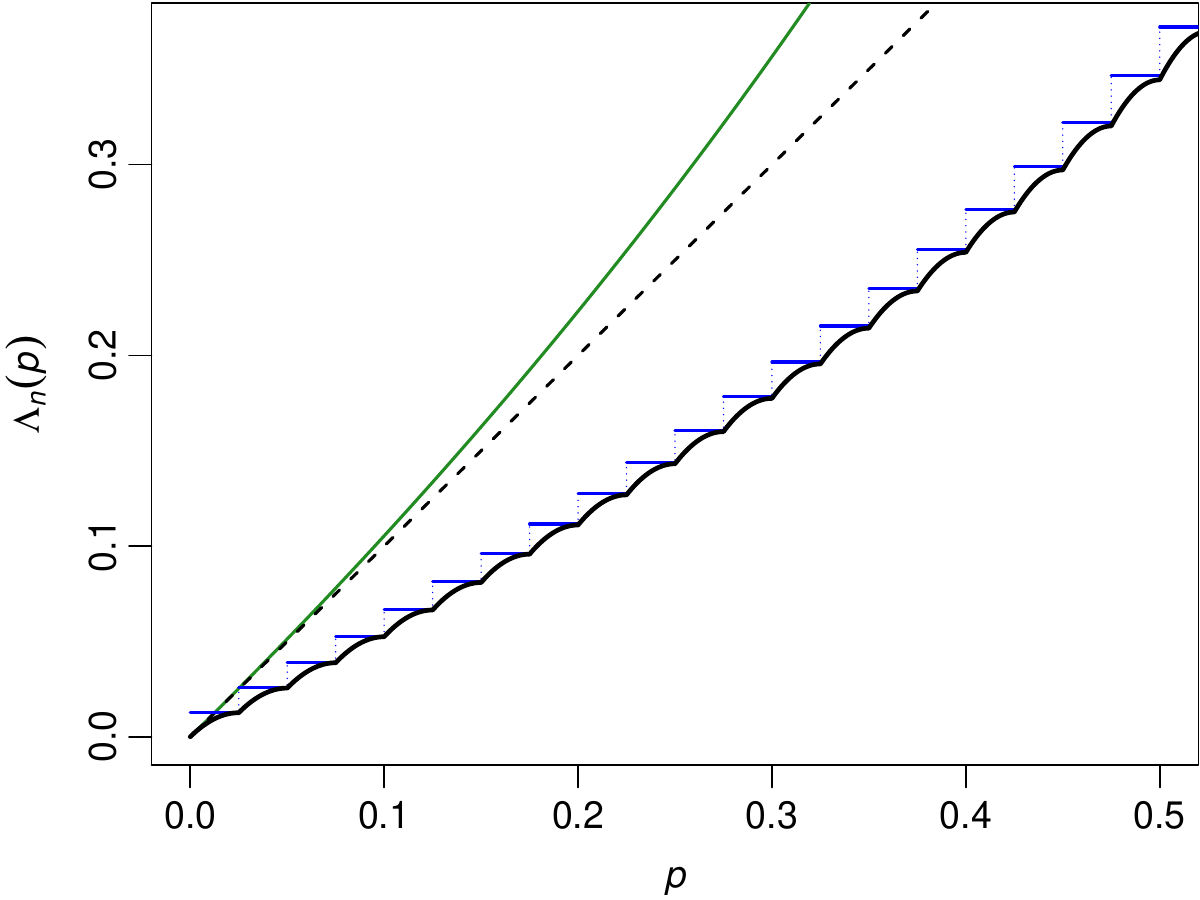}
\hfill
\includegraphics[width=0.49\textwidth]{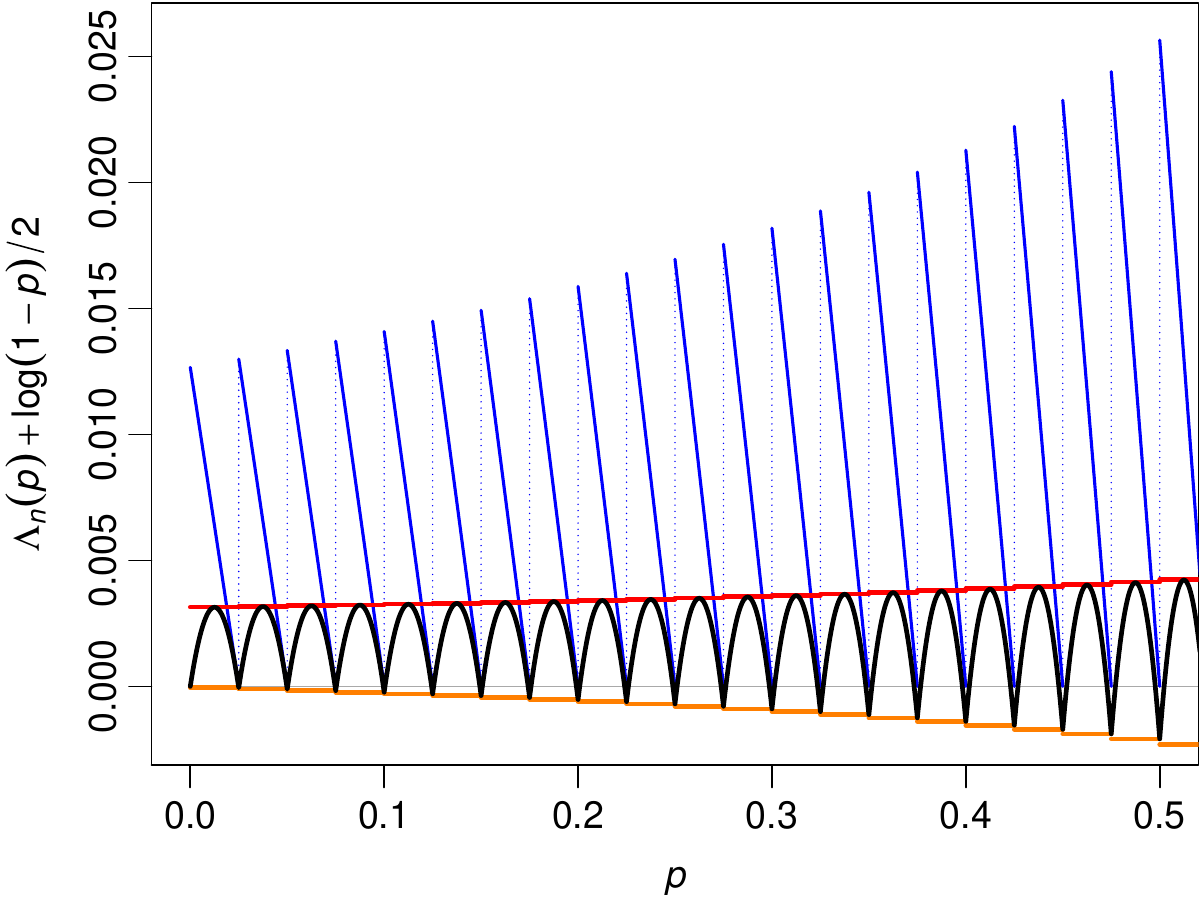}
\caption{Comparing $\Bin(40,p)$ with $\Poiss(40\,p)$.}
\label{fig:BinPoiss40}
\end{figure}

\paragraph{Poisson binomial distributions.}
The distribution $\Bin(n,p)$ can be replaced with the distribution $Q$ of 
$\sum_{i=1}^n Z_i$ with independent Bernoulli variables $Z_i$ with arbitrary parameters 
$p_i := \Pr(Z_i = 1) \in (0,1)$ and $\lambda := \sum_{i=1}^n p_i$ in place of $np$. \cite{Duembgen_Wellner_2019} showed that $\rho(Q, \Poiss(\lambda)) \le (1 - p_*)^{-1}$ with $p_* := \max_{1 \le i \le n} p_i$.

%--------------------------------------------------------
\subsection{Multinomial distributions and Poissonization}
%--------------------------------------------------------

\paragraph{Multinomial distributions.}
The previous bounds for the approximation of binomial by Poisson distributions imply bounds for the approximation of multinomial distributions by products of Poisson distributions. For integers $n, K \ge 1$ and parameters $p_1, \ldots, p_K > 0$ such that $p_+ := \sum_{i=1}^K p_i < 1$, let $(Y_0,Y_1,\ldots,Y_K)$ follow a multinomial distribution
\[
	\mathrm{Mult}(n; p_0, p_1, \ldots, p_K) ,
\]
where $p_0 := 1 - p_+$. Further, let $X_1, \ldots, X_K$ be independent Poisson random variables with parameters $np_1, \ldots, np_K$ respectively. Elementary calculations reveal that with $Y_+ := \sum_{i=1}^K Y_i$ and $X_+ := \sum_{i=1}^K X_i$,
\[
	\LL(Y_1,\ldots,Y_K \,|\, Y_+ = m)
	\ = \ \LL(X_1,\ldots,X_K \,|\, X_+ = m)
	\ = \ \mathrm{Mult} \Bigl( m; \frac{p_1}{p_+}, \ldots, \frac{p_K}{p_+} \Bigr)
\]
for arbitrary integers $m \ge 0$. Moreover,
\[
	Y_+ \ \sim \ \Bin(n, p_+)
	\quad\text{and}\quad
	X_+ \ \sim \ \Poiss(n p_+) .
\]
This implies that for arbitrary integers $x_1, \ldots, x_K \ge 0$ and $x_+ := \sum_{i=1}^K x_i$,
\[
	\frac{\Pr(Y_i = x_i \ \text{for} \ 1 \le i \le K)}
	     {\Pr(X_i = x_i \ \text{for} \ 1 \le i \le K)}
	\ = \ \frac{\Pr(Y_+ = x_+)}{\Pr(X_+ = x_+)} .
\]
Consequently, by \eqref{eq:rho.via.LR},
\[
	\rho \bigl( \LL(X_1,\ldots,X_K), \LL(Y_1,\ldots,Y_K) \bigr)
	\ = \ \rho \bigl( \Bin(n,p_+), \Poiss(np_+) \bigr) ,
\]
and one easily verifies that
\[
	\DTV \bigl( \LL(X_1,\ldots,X_K), \LL(Y_1,\ldots,Y_K) \bigr)
	\ = \ \DTV \bigl( \Bin(n,p_+), \Poiss(np_+) \bigr) .
\]

\paragraph{Poissonization.}
Theorem~\ref{thm:BinPoiss1} applies also to Poissonization for empirical processes: Let $X_1, X_2, X_3, \ldots$ be independent random variables with distribution $P$ on a measurable space $(\XX,\AA)$. Let $M_n$ be the random measure $\sum_{i=1}^n \delta_{X_i}$, and let $\tilde{M}_n$ be a Poisson process on $(\XX,\AA)$ with intensity measure $nP$. Then $\tilde{M}_n$ has the same distribution as $\sum_{i \le N_n} \delta_{X_i}$, where $N_n \sim \Poiss(n)$ is independent from $(X_i)_{i \ge 1}$. For a set $A_o \in \AA$ with $0 < p_o := P(A_o) < 1$, the restrictions of the random measures $M_n$ and $\tilde{M}_n$ to $A_o$ satisfy the equality
\[
	\rho \bigl( \LL(M_n\vert_{A_o}), \LL(\tilde{M}_n\vert_{A_o}) \bigr)
	\ = \ \rho \bigl( \Bin(n,p_o), \Poiss(np_o) \bigr) .
\]
Here $M_n\vert_{A_o}$ and $\tilde{M}_n\vert_{A_o}$ stand for the random measures
\[
	\{A \in \mathcal{A} : A \subseteq A_o\} \ni A \ \mapsto \ M_n(A), \tilde{M}_n(A)
\]
on $A_o$. Indeed, for arbitrary integers $m \ge 0$,
\[
	\LL \bigl( M_n\vert_{A_o} \,\big|\, M_n(A_o) = m \bigr)
	\ = \ \LL \bigl( \tilde{M}_n\vert_{A_o} \,\big|\, \tilde{M}_n(A_o) = m \bigr) ,
\]
while
\[
	M_n(A_o) \ \sim \ \Bin(n, p_o)
	\quad\text{and}\quad
	\tilde{M}_n(A_o) \ \sim \ \Poiss(n p_o) .
\]
Consequently,
\[
	\rho \bigl( \LL(M_n\vert_{A_o}), \LL(\tilde{M}_n\vert_{A_o}) \bigr)
	\ = \ \rho \bigl( \Bin(n,p_o), \Poiss(np_o) \bigr)
\]
and
\[
	\DTV \bigl( \LL(M_n\vert_{A_o}), \LL(\tilde{M}_n\vert_{A_o}) \bigr)
	\ = \ \DTV \bigl( \Bin(n,p_o), \Poiss(np_o) \bigr) .
\]

%======================================
\section{Gamma approximations and more}
\label{sec:Further.examples}
%======================================

In this section we present further examples of bounds for the ratio measure $\rho(Q,P)$. In all but one case, they are related to the approximation of beta by gamma distributions.

%------------------------------
\subsection{Beta distributions}
%------------------------------

In what follows, let $\mathrm{Beta}(a,b)$ be the beta distribution with parameters $a, b > 0$. The corresponding density is given by
\[
	\beta_{a,b}(x) \ = \ \frac{\Gamma(a+b)}{\Gamma(a) \Gamma(b)} \, x_{}^{a-1} (1 - x)_+^{b-1} ,
	\quad x > 0 ,
\]
with the gamma function $\Gamma(a) := \int_0^\infty x^{a-1} e^{-x} \, dx$. Note that we view $\mathrm{Beta}(a,b)$ as a distribution on the halfline $(0,\infty)$, because we want to approximate it by gamma distributions. Specifically, let $\mathrm{Gamma}(a,c)$ be the gamma distribution with shape parameter $a > 0$ and rate parameter (i.e.\ inverse scale parameter) $c > 0$. The corresponding density is given by
\[
	\gamma_{a,c}(x) \ = \ \frac{c^a}{\Gamma(a)} \, x_{}^{a-1} e_{}^{-cx} ,
	\quad x > 0 ,
\]
The next theorem shows that $\mathrm{Beta}(a,b)$ may be approximated by $\mathrm{Gamma}(a,c)$ for suitable rate parameters $c > 0$, provided that $b \gg \max(a,1)$.

\begin{Theorem}
\label{thm:Beta.Gamma}
(i) For arbitrary parameters $a > 0$ and $b > 1$,
\begin{align*}
	\rho \bigl( \mathrm{Beta}(a,b), \mathrm{Gamma}(a,a+b) \bigr) \
	&\le \ (1 - \delta)^{-1/2}
		\qquad\text{and} \\
	\DTV \bigl( \mathrm{Beta}(a,b), \mathrm{Gamma}(a, a+b) \bigr) \
	&\le \ 1 - (1-\delta)^{1/2} \ < \ \frac{\delta}{2 - \delta},
\end{align*}
where
\[
	\delta \ := \ \frac{a+1}{a + b} .
\]

\noindent
(ii) For $a > 0$, $b > 1$, and arbitrary $c>0$,
\[
	\rho \bigl( \mathrm{Beta}(a,b), \mathrm{Gamma}(a, c) \bigr)
	\ \ge \ \rho \bigl( \mathrm{Beta}(a,b), \mathrm{Gamma}(a, a+b-1) \bigr) .
\]
Moreover, for this opimal rate parameter $c = a+b-1$,
\begin{align*}
	\rho \bigl( \mathrm{Beta}(a,b), \mathrm{Gamma}(a, a+b-1) \bigr) \
	&\le \ (1 - \widetilde{\delta})^{-1/2}
		\qquad\text{and} \\
	\DTV \bigl( \mathrm{Beta}(a,b), \mathrm{Gamma}(a, a+b-1) \bigr) \
	&\le \ 1 - (1-\widetilde{\delta})^{1/2}
		\ < \ \frac{\widetilde{\delta}}{2 - \widetilde{\delta}},
\end{align*}
where 
\[
	\widetilde{\delta} \ := \ \frac{a}{a+b-1} \ < \ \delta .
\]
\end{Theorem}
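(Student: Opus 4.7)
\textbf{Proof plan for Theorem~\ref{thm:Beta.Gamma}.}
For part~(i), I would first compute the density ratio explicitly,
\[
    \frac{\beta_{a,b}(x)}{\gamma_{a,a+b}(x)}
    \ = \ \frac{\Gamma(a+b)}{\Gamma(b)\,(a+b)^a}\, (1-x)_+^{b-1} e^{(a+b)x},
\]
differentiate on $(0,1)$, and solve $(a+b)(1-x) = b-1$ to locate its unique interior maximizer at $x^\star = \delta$. Substituting $(a+b)x^\star = a+1$ and $1 - x^\star = (b-1)/(a+b)$ yields the closed form
\[
    \rho\ =\ \frac{\Gamma(a+b)\,(b-1)^{b-1}\,e^{a+1}}{\Gamma(b)\,(a+b)^{a+b-1}},
\]
and the target $\rho \le (1-\delta)^{-1/2}$ becomes $\log \rho + \tfrac12\log(1-\delta) \le 0$. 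Inserting the Stirling expansion $\log \Gamma(x) = (x-\tfrac12)\log x - x + \tfrac12\log(2\pi) + r(x)$ and simplifying, most terms telescope and the inequality collapses to
\[
    (b-\tfrac12)\log\!\Bigl(1 + \tfrac{1}{b-1}\Bigr) + r(b) - r(a+b)\ \ge\ 1.
\]
From Section~\ref{sec:Proofs} I can invoke that $r>0$ is strictly decreasing on $(0,\infty)$, and the classical Stirling inequality $(u+\tfrac12)\log(1+1/u) > 1$ for every $u>0$; each contribution is already nonnegative and the first exceeds $1$. The total variation bound then follows from Proposition~\ref{prop:rho.and.divergences}(a) with $\psi(t) = (1-t^{-1})_+$, giving $\DTV \le 1 - \rho^{-1} \le 1 - (1-\delta)^{1/2} = \delta/(1+\sqrt{1-\delta}) < \delta/(2-\delta)$, the last step using $\sqrt{1-\delta} > 1-\delta$ on $(0,1)$.

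Part~(ii) requires optimizing the bound over the rate parameter $c$. Treating $c$ as free, the ratio $\beta_{a,b}/\gamma_{a,c}$ is proportional to $(1-x)_+^{b-1}e^{cx}$, whose maximum on $[0,1)$ is either at $x=0$ (when $c\le b-1$, giving $\rho(c) = \Gamma(a+b)/(\Gamma(b)c^a)$, strictly decreasing in $c$) or at $x^\star = 1-(b-1)/c$ (when $c > b-1$), in which case
\[
    \rho(c)\ =\ \frac{\Gamma(a+b)\,(b-1)^{b-1}\,e^{1-b}}{\Gamma(b)}\cdot\frac{e^c}{c^{a+b-1}}.
\]
The second factor is strictly convex with unique minimum at $c = a+b-1$, which lies in $(b-1,\infty)$ because $a>0$; together with continuity at the junction $c = b-1$, this identifies $c^\star = a+b-1$ as the global minimizer on $(0,\infty)$, proving the comparison claim.

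For the explicit bound at $c^\star$, the same Stirling reduction as in~(i) converts $\rho(c^\star) \le (1-\widetilde\delta)^{-1/2}$ into
\[
    \phi(b-1) - \phi(a+b-1) + r(b) - r(a+b)\ \ge\ 0,
    \qquad \phi(u) := (u+\tfrac12)\log\!\bigl(1 + \tfrac{1}{u}\bigr).
\]
The identity $\phi(u) = \sum_{k\ge 0} (2k+1)^{-1}(2u+1)^{-2k}$ makes $\phi$ manifestly strictly decreasing on $(0,\infty)$, and $r$ is likewise strictly decreasing, so since $a+b-1 > b-1$ both differences are nonnegative. The total variation bound then follows exactly as in part~(i). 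The main obstacle throughout is calibrating the Stirling correction $r$ finely enough to pin down the exact factor $(1-\delta)^{-1/2}$ (and its analogue $(1-\widetilde\delta)^{-1/2}$); the Stirling--Robbins refinements developed in Section~\ref{sec:Proofs} are tailored for this, delivering both the $\log\Gamma$ monotonicity and the series representation of $\phi$ that make the final reductions routine.
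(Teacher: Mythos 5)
Your plan is correct and matches the paper's proof in all essentials: the same location of the interior maximizer of $(1-x)^{b-1}e^{cx}$, the same case split at $c=b-1$ with convexity in $c$ identifying $c^\star=a+b-1$, and the same Stirling--Binet reduction, since your remainder $r$ and its strict monotonicity are exactly Lemma~\ref{lem:Binet.Stirling}, your inequality $(u+\tfrac12)\log(1+1/u)>1$ is the paper's \eqref{ineq:log.ratio.2} with $(x,a,b)=(b-1,1,1/2)$, and your series for $\phi$ is the paper's $A,B$-series comparison with $A=2b-1$, $B=2(a+b)-1$. I checked your telescoped reductions in both parts and they are exact, so no gaps remain.
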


\paragraph{Remarks.}
The rate parameter $c = a+b$ is canonical in the sense that the means of $\mathrm{Beta}(a,b)$ and $\mathrm{Gamma}(a,a+b)$ are both equal to $a/(a + b)$. But note that 
\[
	\frac{\widetilde{\delta}}{\delta}
	\ = \ \frac{a}{a+1} \cdot \frac{a+b}{a+b-1}
	\ \approx \ \frac{a}{a+1} 
\]
if $b \gg \max\{a,1\}$. Hence, $\mathrm{Gamma}(a, a+b-1)$ yields a remarkably better approximation than $\mathrm{Gamma}(a,a+b)$, unless $a$ is rather large or $b$ is close to $1$.

In the proof of Theorem~\ref{thm:Beta.Gamma} it is shown that in the special case of $a = 1$, one can show the following: For $b > 1$,
\[
	\log \rho \bigl( \mathrm{Beta}(1,b), \mathrm{Gamma}(1, b) \bigr)
	\ = \ (b - 1) \log(1 - 1/b) + 1 ,
\]
and for $b \ge 2$,
\[
	\left.\begin{array}{r}
		\log \rho \bigl( \mathrm{Beta}(1,b), \mathrm{Gamma}(1, b) \bigr) \\[1ex]
		\DTV \bigl( \mathrm{Beta}(1,b), \mathrm{Gamma}(1, b) \bigr)
	\end{array}\!\!\right\}
	\ \le \ \frac{1}{2b} + \frac{1}{4b^2} .
\]

%---------------------------------------------------------
\subsection{The L\'{e}vy--Poincar\'{e} projection problem} 
%---------------------------------------------------------

Let $\bs{U} = (U_1, U_2, \ldots , U_n)$ be uniformly distributed on the unit sphere in $\R^n$. It is well-known that $\bs{U}$ can be represented as $\bs{Z} / \|\bs{Z}\|$ where $\bs{Z} \sim N_n (0, I)$ and $\|\cdot\|$ denotes standard Euclidean norm. Then the first $k$ coordinates of $\bs{U}$ satisfy
\begin{align}
\label{LevyRepresentation}
	\sqrt{n} \, (U_1, \ldots , U_k) \
	&\stackrel{d}{=} \ (Z_1, \ldots, Z_k) \Big/
		\biggl( n^{-1} \sum_{j=1}^n Z_j^2 \biggr)^{1/2} \\
\nonumber
	&\rightarrow_d \ (Z_1, \ldots , Z_k) \ \sim \ N_k(0,I_k) ,
\end{align}
since $n^{-1} \sum_{j=1}^n Z_j^2 \rightarrow_p 1$ by the weak law of large numbers. Indeed, let
\[
	Q_{n,k} \ := \ \LL \bigl( r_n (U_1, \ldots , U_k) \bigr)
\]
with $r_n > 0$, and let
\[
	P_k \ := \ \LL(Z_1, \ldots , Z_k) \ = \ N_k(0,I) .
\]
\cite{Diaconis_Freedman_1987} showed that
\[
	\DTV(Q_{n,k}, P_k) \le \frac{k+3}{n-k-3}
	\qquad\text{for} \ \ 1 \le k \le n-4 \ \text{and} \ r_n = \sqrt{n}.
\]
By means of Theorem~\ref{thm:Beta.Gamma}, this bound can be improved by a factor larger than $2$. The approximation becomes even better if we set $r_n = \sqrt{n-2}$. To verify all this, we consider the random variables $R_k := \bigl( \sum_{i=1}^k Z_i^2 \bigr)$, $R_n := \bigl( \sum_{i=1}^n Z_i^2 \bigr)$ and
\[
	\bs{V} \ := \ R_k^{-1} (Z_1,\ldots,Z_k) .
\]
Note that $\bs{V}$ is uniformly distributed on the unit sphere in $\R^k$ and independent of $(R_k, R_n)$. Moreover,
\[
	(Z_1,\ldots,Z_k) \ = \ R_k \bs{V}
	\quad\text{and}\quad
	(U_1,\ldots,U_k) \ = \ \frac{R_k}{R_n} \, \bs{V} .
\]
But $R_k^2 \sim \mathrm{Gamma}(k/2,1/2)$ and $R_k^2/R_n^2 \sim \mathrm{Beta}(k/2,(n-k)/2)$. Hence,
\begin{align*}
	\rho(Q_{n,k},P_k) \
	&= \ \rho \bigl( \LL(r_n R_k/R_n), \LL(R_k) \bigr) \\
	&= \ \rho \bigl( \LL(R_k^2/R_n^2), \LL(r_n^{-2} R_k^2) \bigr) \\
	&= \ \rho \bigl( \mathrm{Beta}(k/2, (n - k)/2), \mathrm{Gamma}(k/2,r_n^2/2) \bigr) .
\end{align*}
Applying Theorem~\ref{thm:Beta.Gamma} with $a := k/2$, $b := (n-k)/2$ and $c := r_n^2/2$ yields the following bounds:

\begin{Corollary}
\label{cor:Levy.Poincare}
For $n > k+2$,
\begin{align*}
	\rho(Q_{n,k},P_k) \
	&< \ (1 - \delta)^{-1/2}
		\qquad\text{and} \\
	\DTV(Q_{n,k},P_k) \
	&< \ 1 - \sqrt{1 - \delta}
		\ < \ \frac{\delta}{2 - \delta} ,
\end{align*}
where
\[
	\delta \ = \ \begin{cases}
		\displaystyle
		\frac{k+2}{n} & \text{if} \ r_n = \sqrt{n} , \\[2ex]
		\displaystyle
		\frac{k}{n-2} & \text{if} \ r_n = \sqrt{n - 2} .
	\end{cases}
\]
\end{Corollary}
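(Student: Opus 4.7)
The reduction carried out in the paragraph immediately preceding the statement has already done the main conceptual work: it identifies
\[
	\rho(Q_{n,k}, P_k) \ = \ \rho\bigl(\mathrm{Beta}(k/2,(n-k)/2),\, \mathrm{Gamma}(k/2, r_n^2/2)\bigr),
\]
and by the same argument (sufficiency of $R_k/R_n$ versus $r_n^{-1} R_k$, together with independence of $\bs V$ from $(R_k,R_n)$) one gets the analogous equality for $\DTV$. So the plan is to specialize Theorem~\ref{thm:Beta.Gamma} with $a := k/2$ and $b := (n-k)/2$; the hypothesis $b > 1$ translates exactly to $n > k+2$.

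For $r_n = \sqrt{n}$, one has $c := r_n^2/2 = n/2 = a+b$, which matches the canonical rate parameter of Theorem~\ref{thm:Beta.Gamma}(i). Its $\delta = (a+1)/(a+b)$ evaluates to $(k+2)/n$. For $r_n = \sqrt{n-2}$, one has $c := r_n^2/2 = (n-2)/2 = a+b-1$, the optimal rate parameter of Theorem~\ref{thm:Beta.Gamma}(ii), and its $\widetilde{\delta} = a/(a+b-1)$ evaluates to $k/(n-2)$. Substituting into the $\rho$- and $\DTV$-bounds in the two parts of Theorem~\ref{thm:Beta.Gamma} gives precisely the claimed inequalities. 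The final elementary bound $1 - \sqrt{1-\delta} < \delta/(2-\delta)$ is a one-line computation (rearrange and square).

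There is essentially no obstacle beyond bookkeeping. The only point worth a sentence is that Theorem~\ref{thm:Beta.Gamma} is stated with $\le$ whereas the corollary asserts $<$: this is fine because the Beta/Gamma density ratio is a smooth, non-constant function on $(0,\infty)$, so it is not of the degenerate $\{0,\rho\}$ form that would be needed for equality in Proposition~\ref{prop:rho.and.divergences}, and inspection of the proof of Theorem~\ref{thm:Beta.Gamma} shows the bounds there are in fact strict under the stated parameter regime.
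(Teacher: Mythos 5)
Your proposal is correct and matches the paper's own argument: the paper proves Corollary~\ref{cor:Levy.Poincare} exactly by the reduction in the preceding paragraph followed by specializing Theorem~\ref{thm:Beta.Gamma} with $a = k/2$, $b = (n-k)/2$, $c = r_n^2/2$, and your translations $b>1 \Leftrightarrow n>k+2$, $\delta = (k+2)/n$, $\widetilde{\delta} = k/(n-2)$ are all right. Your remark on strictness is also on point (the inequalities in the proof of Theorem~\ref{thm:Beta.Gamma} are strict via \eqref{ineq:Beta.Gamma.3}), though note that for the $\rho$-bound the relevant reason is this strictness in the proof, not the non-degeneracy condition of Proposition~\ref{prop:rho.and.divergences}, which concerns equality in the total variation bound.
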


Figures~\ref{fig:Levy.Poincare.a} and \ref{fig:Levy.Poincare.b} illustrate Corollary~\ref{cor:Levy.Poincare} in case of $k = 1$. For dimensions $n = 5, 10$, Figure~\ref{fig:Levy.Poincare.a} shows the standard Gaussian density $f$ (green) and the density $g_n$ of $Q_{n,1}$ in case of $r_n = \sqrt{n}$ (black) and $r_n = \sqrt{n-2}$ (blue). Figure~\ref{fig:Levy.Poincare.b} depicts the corresponding ratios $g_n/f$. The dotted black and blue lines are the corresponding upper bounds $(1 - \delta)^{-1/2}$ from Corollary~\ref{cor:Levy.Poincare}. These pictures show clearly that using $r_n = \sqrt{n-2}$ instead of $r_n = \sqrt{n}$ yields a substantial improvement.

\begin{figure}
\includegraphics[width=0.49\textwidth]{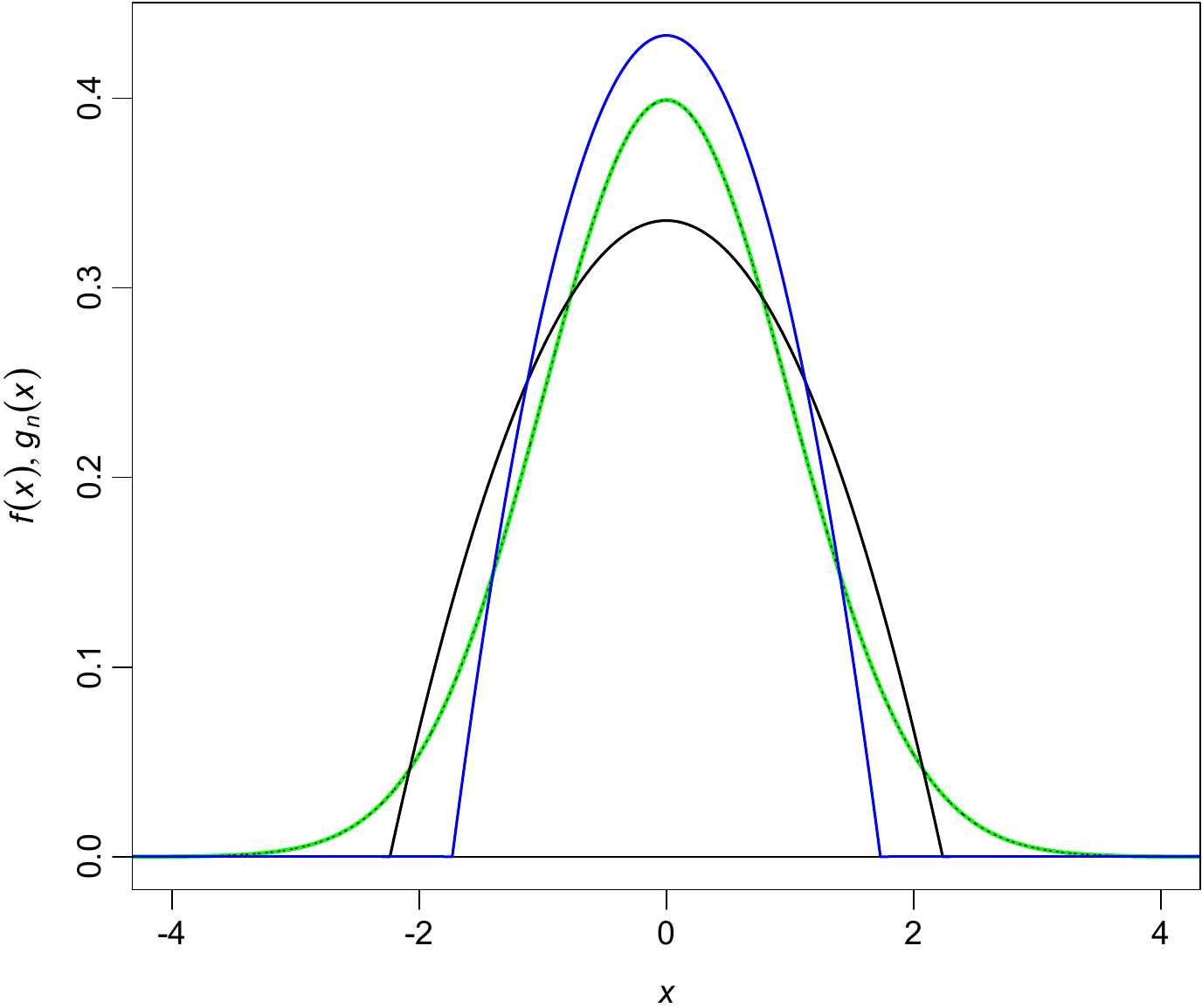}
\hfill
\includegraphics[width=0.49\textwidth]{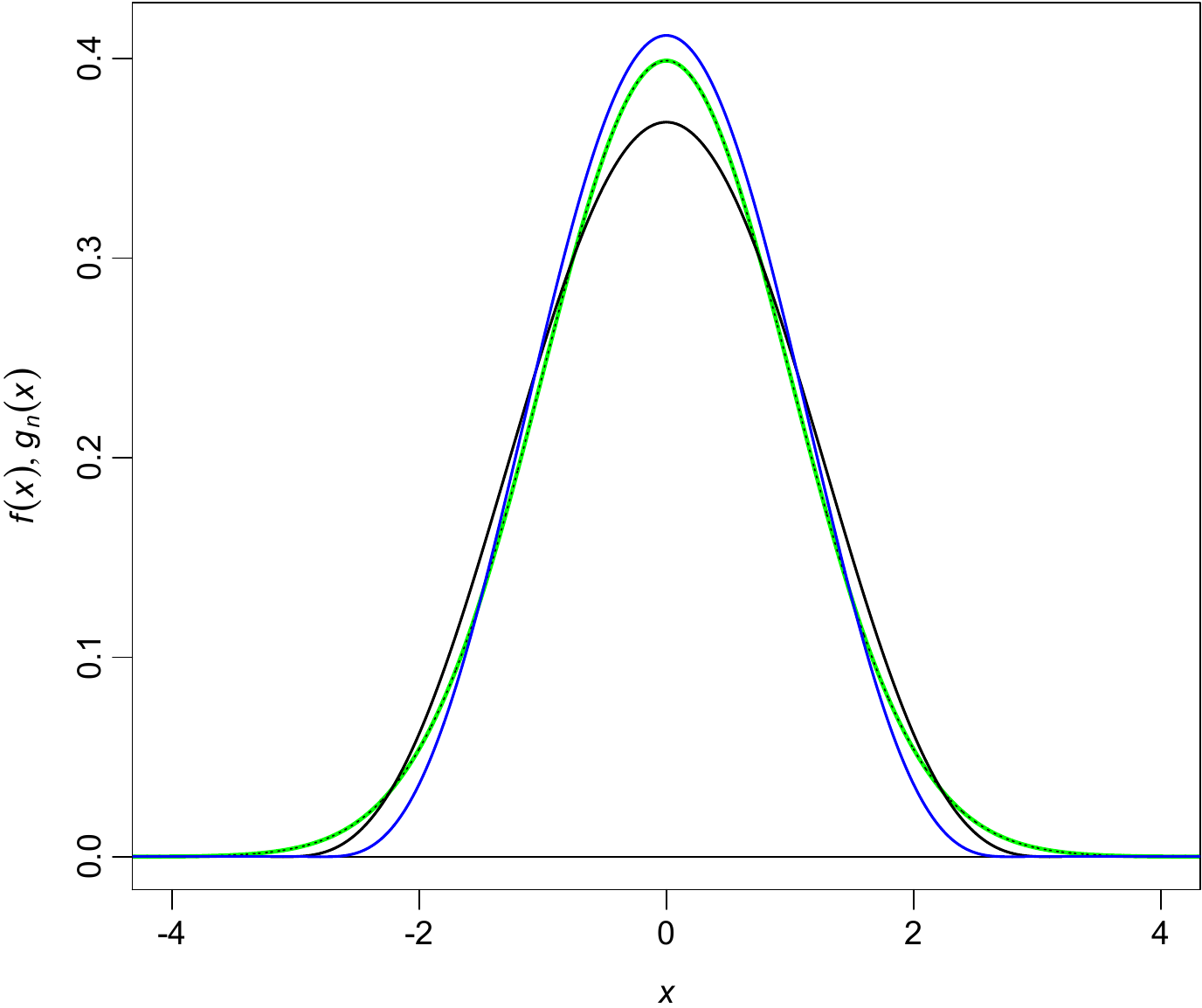}

\caption{Densities of $N(0,1)$ and $Q_{n,1}$ for $n = 5, 10$.}
\label{fig:Levy.Poincare.a}
\end{figure}

\begin{figure}
\includegraphics[width=0.49\textwidth]{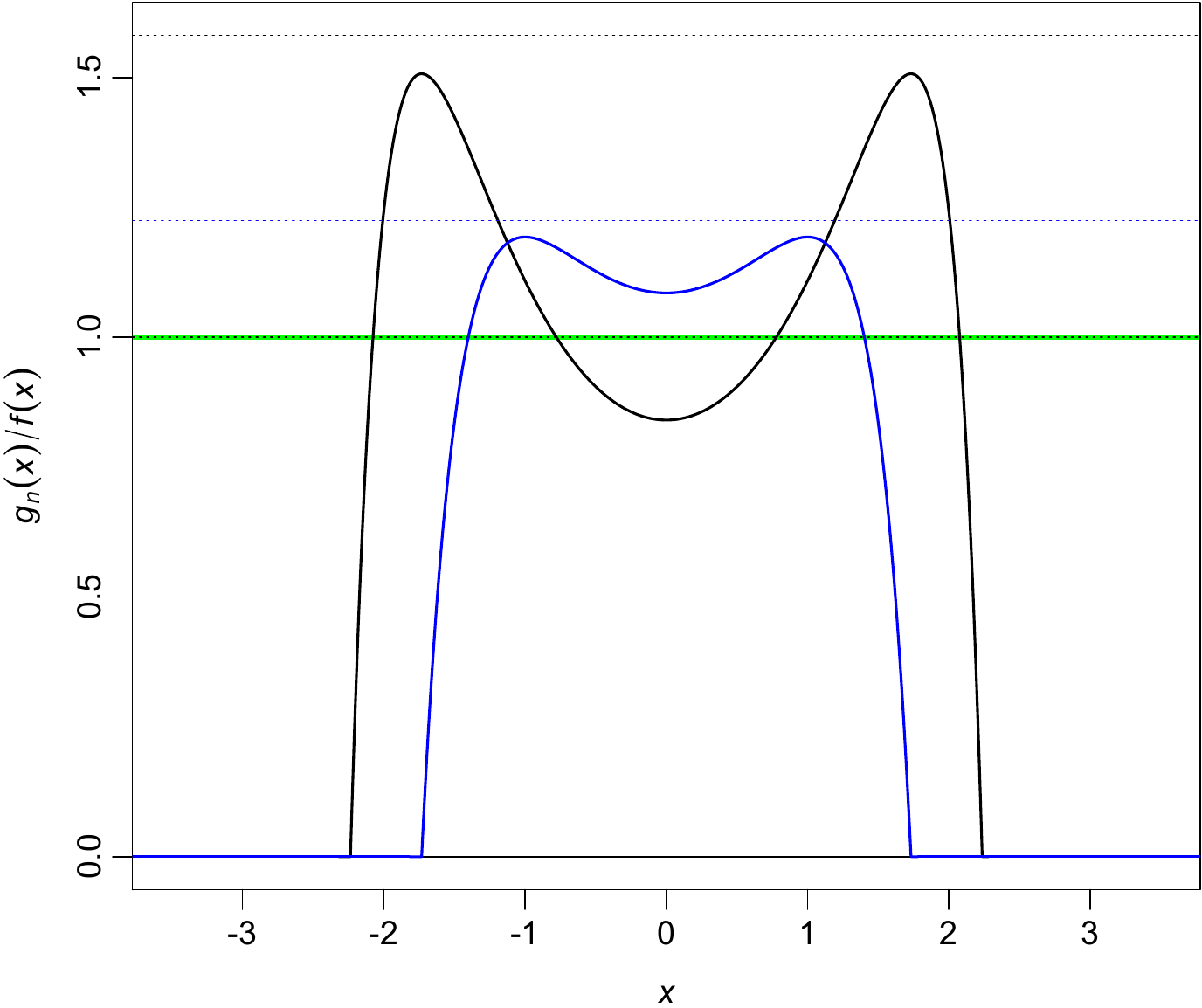}
\hfill
\includegraphics[width=0.49\textwidth]{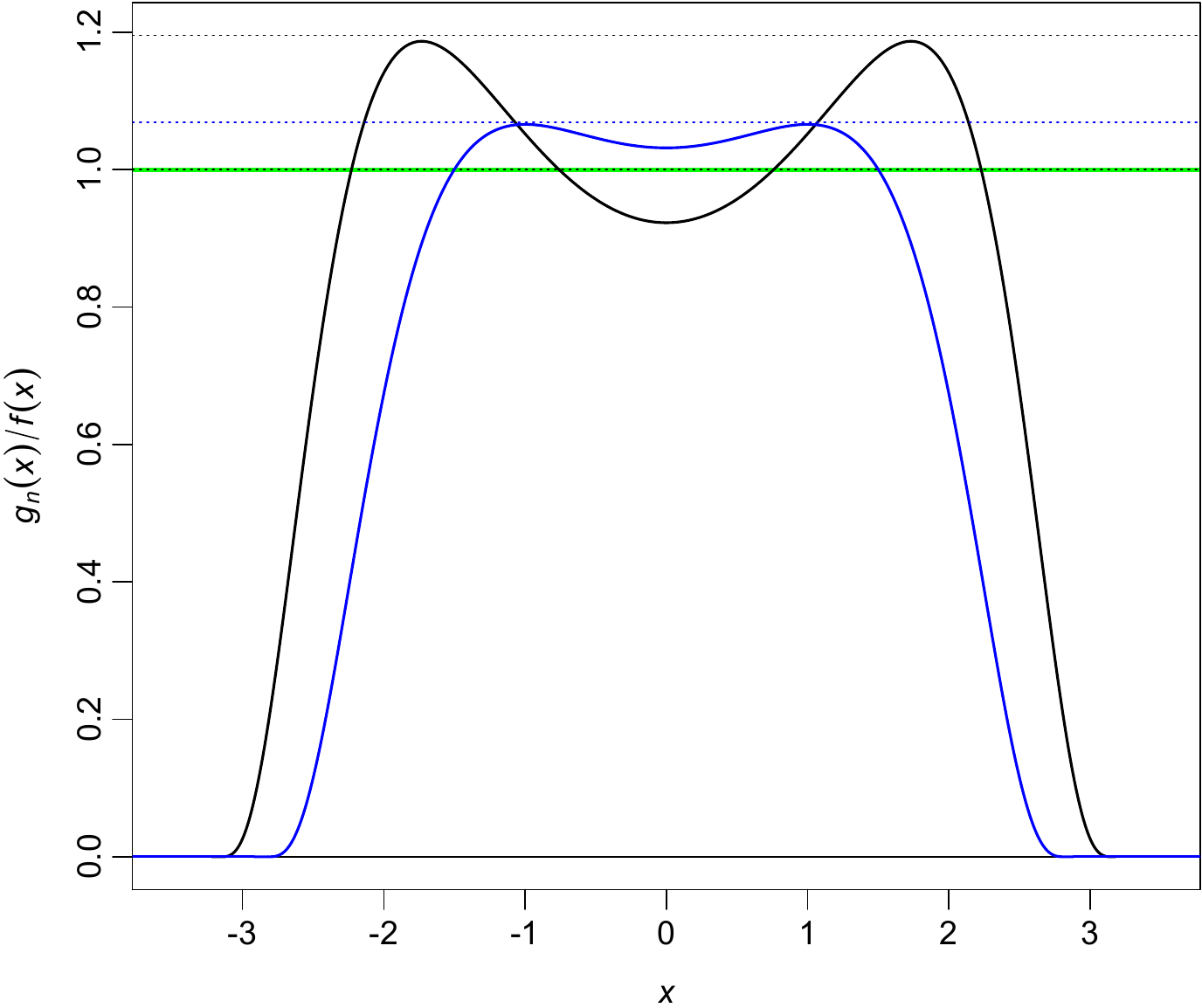}

\caption{Density ratios for Figure~\ref{fig:Levy.Poincare.a}.}
\label{fig:Levy.Poincare.b}
\end{figure}

%--------------------------------------------------------
\subsection{Dirichlet distributions and uniform spacings}
%--------------------------------------------------------

\paragraph{Dirichlet distributions.}
For integers $1 \le k \le N$ and parameters $a_1, \ldots, a_N, c > 0$, let $\bs{X}$ be a random vector with independent components $X_i \sim \mathrm{Gamma}(a_i,c)$. With $X_+ := \sum_{i=1}^N X_i$, it is well-known that the random vector
\[
	\bs{Y} = (Y_1,\ldots,Y_N) \ := \ \Bigl( \frac{X_1}{X_+}, \ldots, \frac{X_N}{X_+} \Bigr)
\]
and $X_+$ are independent, where $X_+ \sim \mathrm{Gamma}(a_+,c)$ with
\[
	a_+ \ := \ \sum_{i=1}^N a_i .
\]
The distribution of $\bs{Y}$ is the Dirichlet distribution with parameters $a_1,\ldots,a_N$, written
\[
	\bs{Y} \ \sim \ \mathrm{Dirichlet}(a_1, \ldots, a_N) .
\]
Now let us focus on the first $k$ components of $\bs{X}$ and $\bs{Y}$:
\begin{align*}
	(X_1,\ldots,X_k) \
	&= \ X_+^{(k)} (V_1,\ldots,V_k) , \\
	(Y_1,\ldots,Y_k) \
	&= \ \frac{X_+^{(k)}}{X_+} (V_1,\ldots,V_k) ,
\end{align*}
with
\[
	X_+^{(k)} \ := \ \sum_{i=1}^k X_i
	\quad\text{and}\quad
	V_i \ := \ \frac{X_i}{X_+^{(k)}} .
\]
Then $(V_1,\ldots,V_k) \sim \mathrm{Dirichlet}(a_1, \ldots, a_k)$ and is independent of $(X_+^{(k)}, X_+)$, while
\[
	\frac{X_+^{(k)}}{X_+} \ \sim \ \mathrm{Beta}(a_+^{(k)},a_+ - a_+^{(k)})
	\quad\text{and}\quad
	X_+^{(k)} \ \sim \ \mathrm{Gamma}(a_+^{(k)}, c)
\]
with
\[
	a_+^{(k)} \ := \ \sum_{i=1}^k a_i .
\]
Hence, the difference between $\LL(Y_1,\ldots,Y_k)$ and $\LL(X_1,\ldots,X_k)$, in terms of the ratio measure, is the difference between $\mathrm{Beta}(a_+^{(k)},a_+ - a_+^{(k)})$ and $\mathrm{Gamma}(a_+^{(k)}, c) $. Thus Theorem~\ref{thm:Beta.Gamma} yields the following bounds:

\begin{Corollary}
\label{cor:Dirichlet}
Let $P_k := \otimes_{i=1}^k \mathrm{Gamma}(a_i,c)$, and let $Q_{N,k} := \LL(Y_1,\ldots,Y_k)$. Then
\begin{align*}
	\rho(Q_{N,k},P_k) \
	&< \ (1 - \delta)^{-1/2}
		\qquad\text{and} \\
	\DTV(Q_{N,k},P_k) \
	&< \ 1 - \sqrt{1 - \delta} \ < \ \frac{\delta}{2 - \delta} ,
\end{align*}
where either
\[
	c \ = \ a_+
	\quad\text{and}\quad
	\delta \ = \ \frac{a_+^{(k)} + 1}{a_+} ,
\]
or
\[
	c \ = \ a_+ - 1
	\quad\text{and}\quad
	\delta \ = \ \frac{a_+^{(k)}}{a_+ - 1} .
\]
\end{Corollary}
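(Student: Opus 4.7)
The plan is to reduce the $k$-dimensional comparison to the scalar beta-versus-gamma comparison already handled by Theorem~\ref{thm:Beta.Gamma}. The starting point is the representation spelled out just before the statement,
\begin{align*}
	(X_1,\ldots,X_k) \
	&= \ X_+^{(k)} \cdot (V_1,\ldots,V_k), \\
	(Y_1,\ldots,Y_k) \
	&= \ \frac{X_+^{(k)}}{X_+} \cdot (V_1,\ldots,V_k),
\end{align*}
where $\bs{V} := (V_1,\ldots,V_k) \sim \mathrm{Dirichlet}(a_1,\ldots,a_k)$ appears in both lines and is independent of the scalar prefactor. The only difference between $P_k$ and $Q_{N,k}$ lies in the law of that prefactor, namely $\mathrm{Gamma}(a_+^{(k)},c)$ versus $\mathrm{Beta}(a_+^{(k)}, a_+ - a_+^{(k)})$, so one expects the discrepancy between the two joint distributions to be exactly the discrepancy between these two scalar laws.

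To turn this into a rigorous identity, I would change variables from $\bs{x} \in (0,\infty)^k$ to $(s,\bs{v})$ with $s := \sum_{i=1}^k x_i$ and $\bs{v} := \bs{x}/s$ on the open $(k-1)$-simplex; the Jacobian of $(s,\bs{v}) \mapsto s\bs{v}$ is $s^{k-1}$. The density $f$ of $(X_1,\ldots,X_k)$ at $\bs{x} = s\bs{v}$ factors as $\gamma_{a_+^{(k)},c}(s)\,\pi(\bs{v})\,s^{-(k-1)}$, where $\pi$ denotes the $\mathrm{Dirichlet}(a_1,\ldots,a_k)$ density on the simplex. The density $g$ of $(Y_1,\ldots,Y_k)$ factors in exactly the same way, with $\gamma_{a_+^{(k)},c}(s)$ replaced by $\beta_{a_+^{(k)}, a_+ - a_+^{(k)}}(s)$ and the support restricted to $s \in (0,1)$. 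Forming the ratio $g/f$ cancels the common factor $\pi(\bs{v})\,s^{-(k-1)}$, so $g(\bs{x})/f(\bs{x})$ depends only on $s$ and equals the scalar density ratio; integrating $|g - f|$ in the new coordinates cancels $\pi(\bs{v})$ against the Jacobian (using $\int \pi(\bs{v})\,d\bs{v} = 1$) and yields the scalar $L^1$ distance. Thus both $\rho(Q_{N,k},P_k)$ and $\DTV(Q_{N,k},P_k)$ coincide with $\rho\bigl(\mathrm{Beta}(a_+^{(k)}, a_+ - a_+^{(k)}),\,\mathrm{Gamma}(a_+^{(k)},c)\bigr)$ and the analogous total variation, respectively.

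With the reduction in place, Theorem~\ref{thm:Beta.Gamma} applied with $a := a_+^{(k)}$ and $b := a_+ - a_+^{(k)}$ finishes the proof at once for each of the two values of $c$: part~(i) with $c = a + b = a_+$ yields $\delta = (a_+^{(k)}+1)/a_+$, and part~(ii) with $c = a + b - 1 = a_+ - 1$ yields $\delta = a_+^{(k)}/(a_+-1)$. I anticipate no conceptual obstacle; the main routine task is the change-of-variables bookkeeping, and one should simply note that the mild hypothesis $b > 1$ of Theorem~\ref{thm:Beta.Gamma} reads $a_+ - a_+^{(k)} > 1$, which is precisely the condition ensuring $\delta < 1$ in both cases and hence that the stated bounds are nontrivial.
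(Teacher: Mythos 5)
Your proposal is correct and follows exactly the paper's route: the paper derives the corollary from the decomposition $(Y_1,\ldots,Y_k) = (X_+^{(k)}/X_+)\bs{V}$ versus $(X_1,\ldots,X_k) = X_+^{(k)}\bs{V}$ stated just before the corollary, reducing the ratio measure to that of $\mathrm{Beta}(a_+^{(k)}, a_+ - a_+^{(k)})$ versus $\mathrm{Gamma}(a_+^{(k)},c)$ and applying Theorem~\ref{thm:Beta.Gamma} with $a = a_+^{(k)}$, $b = a_+ - a_+^{(k)}$. Your explicit change-of-variables verification of the factorization, and your observation that the hypothesis $b > 1$ amounts to $a_+ - a_+^{(k)} > 1$ and hence $\delta < 1$ in both parametrizations, are correct details that the paper leaves implicit.
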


\paragraph{Uniform spacings.}
A special case of the previous result are uniform spacings: For an integer $n \ge 2$, let $U_1, \ldots, U_n$ be independent random variables with uniform distribution on $[0,1]$. Then we consider the order statistics $0 < U_{n:1} < U_{n:2} < \cdots < U_{n:n} < 1$. 
With $U_{n:0} := 0$ and $U_{n:n+1} := 1$, it is well-known that
\[
	(U_{n:j} - U_{n:j-1})_{j=1}^{n+1}
	\ \sim \ \mathrm{Dirichlet}(\underbrace{1,1,\ldots,1}_{n+1 \ \text{times}}) .
\]
That means, the $n+1$ spacings have the same distribution as $(E_j/E_+)_{j=1}^{n+1}$ with independent, standard exponential random variables $E_1, \ldots, E_{n+1}$ and $E_+ := \sum_{j=1}^{n+1} E_j$. Consequently, Corollary~\ref{cor:Dirichlet} and the second remark after Theorem~\ref{thm:Beta.Gamma} yield the following bounds:

\begin{Corollary}
\label{cor:Unif.spacings}
For integers $1 \le k < n$ let $Q_{n,k}$ be the distribution of the vector
\[
	Y_{n,k} \ := \ n (U_{n:j} - U_{n:j-1})_{j=1}^k .	
\]
Further let $P_k$ be the $k$-fold product of the standard exponential distribution. Then
\[
	\rho(Q_{n,k}, P_k)
	\ \le \ \begin{cases}
		\displaystyle
		\exp \Bigl( \frac{1}{2n} + \frac{1}{4n^2} \Bigr)
			& \text{if} \ k = 1 , \\[2ex]
		\displaystyle
		\Bigl( 1 - \frac{k}{n} \Bigr)^{-1/2}
			& \text{in general}.
		\end{cases}
\]
In particular,
\[
	\DTV(Q_{n,k},P_k)
	\ \le \ \begin{cases}
		\displaystyle
		\frac{1}{2n} + \frac{1}{4n^2}
			& \text{if} \ k = 1 , \\[2ex]
		\displaystyle
		1 - \sqrt{1 - \frac{k}{n}}
		\ < \ \frac{k}{2n - k}
			& \text{in general} .
	\end{cases}
\]
\end{Corollary}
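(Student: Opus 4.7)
The plan is to reduce the problem to Corollary~\ref{cor:Dirichlet} (for general $k$) and to the second remark after Theorem~\ref{thm:Beta.Gamma} (for the sharper $k=1$ case), via the classical representation of uniform spacings as ratios of independent exponentials. The single conceptual ingredient is that both $\rho$ and $\DTV$ are invariant under the bijection $\bs{y} \mapsto n\bs{y}$, so the $n$-scaling built into $Y_{n,k}$ can be absorbed into the rate parameter of the Gamma target rather than applied to the reference law.

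First I would recall that $(U_{n:j} - U_{n:j-1})_{j=1}^{n+1} \stackrel{d}{=} (E_j/E_+)_{j=1}^{n+1}$, where $E_1,\ldots,E_{n+1}$ are iid standard exponentials and $E_+ := \sum_{j=1}^{n+1} E_j$, so $Y_{n,k} \stackrel{d}{=} n(E_j/E_+)_{j=1}^k$. To apply Corollary~\ref{cor:Dirichlet}, I would take $N := n+1$ and $a_i := 1$ for every $i$, so $a_+ = n+1$ and $a_+^{(k)} = k$, and select the second rate option $c := a_+ - 1 = n$. Then $\otimes_{i=1}^k \mathrm{Gamma}(1, n)$ is precisely the law of $(E_i/n)_{i=1}^k$, and scale invariance yields
\[
	\rho(Q_{n,k}, P_k)
	\ = \ \rho \bigl( \LL\bigl((E_j/E_+)_{j=1}^k\bigr),
		\otimes_{i=1}^k \mathrm{Gamma}(1, n) \bigr)
\]
and the analogous identity for $\DTV$. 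Corollary~\ref{cor:Dirichlet} with $\delta := a_+^{(k)}/(a_+ - 1) = k/n$ then delivers the general branch of both displays, namely $\rho \le (1 - k/n)^{-1/2}$ and $\DTV \le 1 - \sqrt{1 - k/n} < k/(2n - k)$.

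For the sharper bound at $k = 1$, I would instead invoke the second remark after Theorem~\ref{thm:Beta.Gamma}: the Beta/Gamma pairing produced by the Dirichlet reduction is $\mathrm{Beta}(1, n)$ against $\mathrm{Gamma}(1, n)$, and the remark with $a = 1$ and $b = n \ge 2$ (the latter forced by $1 \le k < n$) gives $\log \rho \le 1/(2n) + 1/(4n^2)$ together with the same upper bound directly for $\DTV$; exponentiating produces the $k=1$ entries in the two displays. There is no real analytical obstacle here—the argument is essentially bookkeeping—with the one point worth flagging being that choosing the second rate option $c = n$ in Corollary~\ref{cor:Dirichlet} (rather than the canonical $c = n+1$) is what yields the cleaner $\delta = k/n$ and hence the bounds as stated.
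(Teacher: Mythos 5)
Your proposal is correct and follows essentially the same route as the paper, which justifies the corollary in exactly this way: represent the spacings as $(E_j/E_+)_{j=1}^{n+1} \sim \mathrm{Dirichlet}(1,\ldots,1)$, absorb the factor $n$ by scale invariance of $\rho$ and $\DTV$ so that the reference law becomes $\otimes_{i=1}^k \mathrm{Gamma}(1,n)$, apply Corollary~\ref{cor:Dirichlet} with $a_+ = n+1$ and the rate option $c = a_+ - 1 = n$ (hence $\delta = k/n$, giving $k/(2n-k)$), and invoke the $\mathrm{Beta}(1,n)$ versus $\mathrm{Gamma}(1,n)$ remark after Theorem~\ref{thm:Beta.Gamma} with $b = n \ge 2$ for the sharper $k = 1$ bounds. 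Your flagged point---that the non-canonical choice $c = n$ rather than $c = n+1$ is what produces the clean $\delta = k/n$---is precisely the choice the paper makes.
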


\paragraph{Remarks.}
Corollary~\ref{cor:Unif.spacings} gives another proof of the results of \nocite{Runnenburg_Vervaat_1969}{Runnenburg and Vervaat (1969)},
who obtained bounds on $\DTV (Q_{n,k}, P_k)$ by first bounding the Kullback--Leibler divergence;  see their Remark 4.1, pages 74--75. It can be shown via the methods of \nocite{Hall_Wellner_1979}{Hall and Wellner (1979)} that 
\[
	\DTV(Q_{n,1},P_1) \ \le \ \frac{2 e^{-2}}{n} + \frac{e^{-2}}{n^2} ,
\]
where $2 e^{-2} \approx .2707 < 1/2$.

%---------------------------------
\subsection{Student distributions}
\label{subsec:StudentDistributions}
%---------------------------------

For $r > 0$ let $t_r$ denote student's t distribution with $r$ degrees of freedom, with density
\[
	f_r(x) \ = \ \frac{\Gamma((r+1)/2)}{\Gamma(r/2) \sqrt{r\pi}}
		\Bigl( 1 + \frac{x^2}{r} \Bigr)^{-(r+1)/2} .
\]
It is well-known that $f_r$ converges uniformly to the density $\phi$ of the standard Gaussian distribution $N(0,1)$, where $\phi(x) := \exp(- x^2/2) / \sqrt{2\pi}$. The distribution $t_r$ has heavier tails than the standard Gaussian distribution and, indeed,
\[
	\rho \bigl( t_r, N(0,1) \bigr)
	\ = \ \infty .
\]
However, for the reverse ratio measure we do obtain a reasonable upper bound:

\begin{Lemma}
\label{lem:N01.student}
For $r \geq 2$,
\[
	\frac{1}{2(r+1)} \ < \ \log \rho(N(0,1), t_r) \ < \ \frac{1}{2r} .
\]
\end{Lemma}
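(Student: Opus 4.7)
The plan is to compute $\rho(N(0,1), t_r)$ in closed form by maximising the density ratio pointwise, and then to squeeze the resulting expression between the two claimed bounds using sharp estimates for the log-gamma function together with the Taylor expansion of $\log(1+1/r)$.

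Since $\phi$ and $f_r$ are smooth and strictly positive on $\R$, the essential supremum in \eqref{eq:rho.via.LR} is a genuine pointwise supremum. Writing the log-ratio as
\[
	\log \frac{\phi(x)}{f_r(x)}
	\ = \ \log \Gamma(r/2) - \log \Gamma((r+1)/2) + \frac{1}{2}\log(r/2)
		+ \frac{r+1}{2}\log(1 + x^2/r) - \frac{x^2}{2},
\]
I see that the $x$-dependence is only through $u := x^2$. Differentiating in $u$ gives the unique critical point $u = 1$, which is a global maximum since the $u$-derivative equals $\tfrac{r+1}{2(r+u)} - \tfrac{1}{2}$ and the function tends to $-\infty$ as $u \to \infty$. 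Substituting $u=1$ yields
\[
	\log \rho(N(0,1),t_r) \ = \ \frac{r+1}{2}\log(1+1/r) - \frac{1}{2} - G(r),
\]
where $G(r) := \log \Gamma((r+1)/2) - \log \Gamma(r/2) - \tfrac{1}{2}\log(r/2)$.

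Next, a short rearrangement with $t = 1/r$ gives the alternating expansion
\[
	\frac{r+1}{2}\log(1+1/r) - \frac{1}{2}
	\ = \ \sum_{k=1}^{\infty} \frac{(-1)^{k-1}}{2k(k+1)\,r^k}
	\ = \ \frac{1}{4r} - \frac{1}{12 r^2} + \frac{1}{24 r^3} - \cdots,
\]
whose terms decrease in modulus and alternate in sign for $r \ge 2$, so consecutive partial sums sandwich the value; in particular, $\frac{1}{4r} - \frac{1}{12r^2} < \frac{r+1}{2}\log(1+1/r) - \frac{1}{2} < \frac{1}{4r}$.

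Finally, I invoke the log-gamma estimates supplied in Section~\ref{sec:Proofs}. Note that $-G(r) = \log\bigl(\sqrt{r/2}\,\Gamma(r/2)/\Gamma((r+1)/2)\bigr)$, which is governed by the Stirling-type asymptotics of $\Gamma(a+1/2)/\Gamma(a)$ with $a = r/2$; the strengthened bounds in Section~\ref{sec:Proofs} yield both $-G(r) < \frac{1}{4r}$ and a matching lower bound that differs from $\frac{1}{4r}$ only by a term of order $r^{-3}$. Summing the two upper bounds gives $\log\rho < \frac{1}{4r} + \frac{1}{4r} = \frac{1}{2r}$, while summing the two lower bounds reduces the lower inequality $\log\rho > \frac{1}{2(r+1)}$ to the elementary check $\frac{1}{2r} - \frac{1}{12r^2} - O(r^{-3}) > \frac{1}{2(r+1)}$, which holds for $r \ge 2$ since $\frac{1}{2r(r+1)} - \frac{1}{12r^2} = \frac{5r-1}{12r^2(r+1)} > 0$. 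The main obstacle is precisely this calibration: the target corridor has width only $\frac{1}{2r(r+1)}$, so a merely asymptotic expansion of $-G(r)$ cannot resolve the second-order correction, and the explicit, non-asymptotic Stirling--Robbins-type estimates of Section~\ref{sec:Proofs} are essential; the remainder of the argument is one-variable calculus and bookkeeping.
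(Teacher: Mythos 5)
Your route is, in outline, exactly the paper's: maximise the log-ratio in $u = x^2$ (unique critical point $u = 1$), obtain the closed form $\log \rho(N(0,1),t_r) = \frac{r+1}{2}\log(1+1/r) - \frac12 - G(r)$ with $-G(r) = \log\bigl(\Gamma(r/2)\sqrt{r/2}/\Gamma((r+1)/2)\bigr)$, expand the elementary term in a series, and control the gamma term by the auxiliary estimates of Section~\ref{sec:Proofs}. Your alternating expansion $\sum_{k\ge1}(-1)^{k-1}/(2k(k+1)r^k)$ is correct (the paper instead uses the positive series $\frac12\sum_{k\ge2}k^{-1}(r+1)^{1-k}$, which is equivalent). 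However, your upper-bound half has a genuine gap: you calibrate it as $\frac{1}{4r}+\frac{1}{4r}$, which requires $-G(r) < \frac{1}{4r}$, and you assert that this is ``yielded'' by the bounds in Section~\ref{sec:Proofs}. It is not. Lemma~\ref{lem:Ex.sqrt.Gamma} with $x = r/2$ gives only
\[
	\frac{1}{4(r+1)} \ < \ -G(r) \ < \ \frac{1}{4r} + \frac{1}{12r(r^2-1)} ,
\]
so the available upper bound for $-G(r)$ \emph{exceeds} $\frac{1}{4r}$; combined with your elementary bound $< \frac{1}{4r}$ you only get $\log\rho < \frac{1}{2r} + \frac{1}{12r(r^2-1)}$, which misses the target. (The inequality $-G(r) < \frac{1}{4r}$ is in fact true, since $-G(r) = \frac{1}{4r} - \frac{1}{24r^3} + O(r^{-5})$, but establishing it non-asymptotically would be a sharpening of Lemma~\ref{lem:Ex.sqrt.Gamma} that you neither prove nor can cite.) Similarly, your claimed ``matching lower bound differing from $\frac{1}{4r}$ only by $O(r^{-3})$'' overstates the lemma, whose lower bound $\frac{1}{4(r+1)}$ falls short of $\frac{1}{4r}$ by a term of order $r^{-2}$; that half of your argument nevertheless survives, because with the actual bound the needed inequality $\frac{1}{4r} - \frac{1}{12r^2} + \frac{1}{4(r+1)} > \frac{1}{2(r+1)}$ reduces to $3r > r+1$.

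The repair is precisely the paper's calibration: retain a negative second-order term on the elementary side to absorb the positive excess on the gamma side. The paper shows $\frac{r+1}{2}\log(1+1/r) - \frac12 < \frac{1}{4r} - \frac{1}{12r(r+1)}$ and bounds the gamma term by $\frac{1}{4r} + \frac{1}{12r(r^2-1)} \le \frac{1}{4r} + \frac{1}{12r(r+1)}$ using $r - 1 \ge 1$, so the correction terms cancel exactly and the sum is $\frac{1}{2r}$. In your expansion the same works with the three-term alternating bound: the elementary part is $< \frac{1}{4r} - \frac{1}{12r^2} + \frac{1}{24r^3}$, and the total is at most $\frac{1}{2r}$ if and only if $-2r^3 + 3r^2 + 2r - 1 \le 0$, which holds for $r \ge 2$ but equals $-1$ at $r = 2$. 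The margin is razor thin — the target corridor has width $\frac{1}{2r(r+1)}$, as you note yourself — which is exactly why the final calibration cannot be waved through as ``one-variable calculus and bookkeeping'': the specific second-order constants must be tracked, and your stated intermediate bound on $-G(r)$ is stronger than anything the paper's toolbox provides.
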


\paragraph{Remarks.}
It follows from Lemma~\ref{lem:N01.student} that
\[
	r \log \rho(N(0,1), t_r) \ \to \ \frac{1}{2}
	\quad\text{as} \ r \to \infty .
\]
By means of Proposition~\ref{prop:rho.and.divergences}~(a) we obtain the inequality 
$r \DTV(N(0,1),t_r) \le 1/2$ for $r \ge 2$. \cite{Pinelis_2015} proved that
\[
	r \DTV(N(0,1),t_r)
	\ < \ C := \frac{1}{2} \sqrt{ \frac{7 + 5\sqrt{2}}{\pi e^{1+\sqrt{2}}} } \approx 0.3165
\]
for $r\ge 4$, and that $r \DTV \bigl ( N(0,1), t_r \bigr) \rightarrow C$ as 
$r \rightarrow \infty$. So $C$ is optimal in the bound for $\DTV$, 
whereas $1/2$ is optimal for $\rho$.

Let $Z$ and $T_r$ be random variables with distribution $N(0,1)$ and 
$t_r$, respectively, where $r \ge 2$. Then for any Borel set $B \subset \R$,
\[
	\Pr(T_r \in B) \ \ge \ e^{-1/(2r)} P(Z \in B) .
\]
In particular,
\[
	\left.\begin{array}{c}
		\Pr \bigl( \pm T_r < \Phi^{-1}(1 - \alpha) \bigr) \\[1ex]
		\Pr \bigl( |T_r| < \Phi^{-1}(1 - \alpha/2) \bigr)
	\end{array}\!\!\right\} \ \ge \ e^{-1/(2r)} (1 - \alpha) .
\]

%------------------------------------------------------------
\subsection{A counterexample: convergence of normal extremes}
%------------------------------------------------------------

In all previous settings, we derived upper bounds for $\rho(Q,P)$ which implied resonable bounds for $\DTV(Q,P) = \DTV(P,Q)$, whereas $\rho(P,Q) = \infty$ in general. This raises the question whether there are probability densities $g$ and $f_n$, $n \ge 1$, such that $\DTV(f_n,g) \to 0$, but both $\rho(f_n, g) = \infty$ and $\rho (g, f_n) = \infty$? The answer is ``yes'' in view of the following example.

\begin{Example}
Suppose that $Z_1, Z_2, Z_3, \ldots$ are independent, standard Gaussian random variables. Let $V_n := \max\{Z_i : 1 \le i \le n\}$.  Let $b_n>0$ satisfy $2\pi b_n^2 \exp(b_n^2) = n^2$ and then set $a_n := 1/b_n$. Then it is well-known that
\begin{eqnarray} 
	Y_n := (V_n -b_n)/a_n \ \rightarrow_d \ Y_{\infty} \sim G 
	\label{MaxOfGaussConvergeToGumbel}
\end{eqnarray}
where $G$ is the Gumbel distribution function given by $G(x) = \exp(-\exp(-x))$. Set $F_n(x) := P(Y_n \le x) $ for $n\ge 1$ and $x \in \R$.
\cite{Hall_1979} shows that for constants $0 < C_1 < C_2 \le 3$ and sufficiently large $n$,
\[ 
	\frac{C_1}{\log n}
	\ < \ \| F_n - G \|_{\infty} := \sup_{x \in \R} |F_n (x) - G(x)|
	\ < \ \frac{C_2}{\log n} ,
\]
and $d_{\mathrm{L}}(F_n , G) = O(1/\log n)$ for the L\'evy metric $d_{\mathrm{L}}$. It is also known that if $\tilde{b}_n := (2 \log n )^{1/2} - (1/2) \{ \log \log n + \log (4 \pi) \} /(2 \log n)^{1/2}$ and $\tilde{a}_n := 1/\tilde{b}_n$, then $\tilde{a}_n / a_n \rightarrow 1$,  $(\tilde{b}_n - b_n )/a_n \rightarrow 0$ and \eqref{MaxOfGaussConvergeToGumbel} continues to hold with $a_n$ and $b_n$ replaced by $\tilde{a}_n $ and $\tilde{b}_n$, but the rate of convergence in the last display is not better than $(\log \log n)^2/\log n$.
 
In this example the densities $f_n$ of $F_n$ are given by 
\begin{align*}
	f_n(x) \ = \ \Phi (a_n x + b_n )^n
		\frac{n a_n \phi (a_n x + b_n)}{\Phi (a_n x + b_n)} 
	\rightarrow \ G(x) \cdot e^{-x} = G'(x) =: g(x)
\end{align*}
for each fixed $x \in \R$; here $\phi$ is the standard normal density and $\Phi(z) :=\int_{-\infty}^z \phi (y) dy$ is the standard normal distribution function. Thus $\DTV(F_n, G) \rightarrow 0$ by Scheff\'e's lemma. But in this case it is easily seen that both $\rho(f_n, g) = \infty$ and $\rho(g, f_n) = \infty$ where the infinity in the first case occurs in the left tail,  and the infinity in the second case occurs in the right tail.  

We do not know a rate for the total variation convergence in this example, but it cannot be faster than $1/\log n$.
\end{Example}

%=====================================
\section{Proofs and Auxiliary Results}
\label{sec:Proofs}
%=====================================

%--------------------------------------
\subsection{Proofs of the main results}
%--------------------------------------

\begin{proof}[\bf Proof of \eqref{eq:rho.via.LR}]
Suppose that $\mu(\{g/f > r\}) = 0$ for some real number $r > 0$. Then $g \le r f$, $\mu$-almost everywhere, so $Q(A) \le r P(A)$ for all $A \in \AA$, and this implies that $\rho(Q,P) \le r$. On the other hand, if $\mu(\{g/f \ge r\}) > 0$ for some real number $r > 0$, then $A := \{g/f \ge r\} = \{g \ge r f\} \cap \{g > 0\}$ satisfies $Q(A) > 0$ and $Q(A) \ge r P(A)$, whence $\rho(Q,P) \ge r$. These considerations show that $\rho(Q,P)$ equals the $\mu$-essential supremum of $g/f$.
\end{proof}

\begin{proof}[\bf Proof of Proposition~\ref{prop:rho.and.divergences}]
\textbf{(a)} Under the given hypotheses that $\psi$ is non-decreasing with $\psi(1) = 0$ and $g/f \leq \rho$, we have
\begin{equation}
\label{eq:rhoincreasing}
	\int \psi(g/f) \, dQ
	\ \le \ \int_{\{g > f\}} \psi(g/f) \, dQ
	\ \le \ Q(\{g > f\})\psi(\rho) .
\end{equation}
Equality holds in the first inequality if and only if $Q\bigl(\{g < f\} \cap \{\psi(g/f) < 0\}\bigr) = 0$, and in the second inequality if and only if $Q\bigl(\{g > f\} \cap \{\psi(g/f) < \psi(\rho)\}\bigr) = 0$. In particular, if $g/f \in \{0,\rho\}$, then $Q\bigl(\{g < f\}) = Q(\{g/f = 0\}) = 0$ and $Q\bigl(\{g > f\} \cap \{\psi(g/f) < \psi(\rho)\}\bigr) = Q(\emptyset) = 0$, so we have equality in~\eqref{eq:rhoincreasing}.

\noindent
\textbf{(b)} For any convex function $\psi:[0,\infty) \rightarrow \mathbb{R}$ and $y \in [0,\rho]$, we have
\[
  \psi(y) \ \le \ \psi(0) + \frac{y}{\rho}\{\psi(\rho) - \psi(0)\}
\]
with equality in case of $y \in \{0,\rho\}$. Hence
\[
	\int \psi(g/f) \, dP
	\ \le \ \psi(0) + \frac{\psi(\rho) - \psi(0)}{\rho} \int \frac{g}{f} \, dP
	\ = \ \psi(0) + \frac{\psi(\rho) - \psi(0)}{\rho} .
\]
Equality holds if $g/f \in \{0,\rho\}$.
\end{proof}

\begin{proof}[\bf Proof of \eqref{ineq:rho.sampling} and comparison with \eqref{ineq:Freedman.sampling}]
The asserted bounds are trivial in case of $n = 1$, so we assume that $2 \le n \le N$. Note first that
\[
	\log \rho(Q,P)
	\ = \ \log( N^n/[N]_n)
	\ = \ \sum_{j=1}^{n-1} H(j)
\]
with $H(x) := - \log(1 - x/N) = \sum_{\ell=1}^\infty (x/N)^\ell/\ell$ for $x \ge 0$. Since $H(x) \ge x/N$,
\[
	\log \rho(Q,P)
	\ \ge \ \sum_{j=1}^{n-1} j/N \ = \ \frac{n(n-1)}{2N} .
\]
This is essentially \nocite{Freedman_1977}{Freedman's (1977)} argument. For the upper bound, it suffices to show that for $1 \le n < N$, the increment
\begin{equation}
\label{eq:increment.1}
	\log(N^{n+1} / [N]_{n+1}) - \log(N^n/[N]_n)
	\ = \ H(n)
\end{equation}
is not larger than the increment
\begin{equation}
\label{eq:increment.2}
	- \frac{n+1}{2} \log \Bigl( 1 - \frac{n}{N} \Bigr)
	+ \frac{n}{2} \log \Bigl( 1 - \frac{n-1}{N} \Bigr)
	\ = \ (n+1) H(n)/2 - n H(n-1)/2 .
\end{equation}
But the difference between \eqref{eq:increment.2} and \eqref{eq:increment.1} equals
\[
	(n-1) H(n)/2 - n H(n-1)/2
	\ = \ n(n-1) \bigl( H(n)/n - H(n-1)/(n-1) \bigr) / 2
	\ \ge \ 0 ,
\]
because $H(x)/x$ is non-decreasing on $[0,\infty)$. Since $H(tx) > t H(x)$ for $x \in [0,N)$ and $t > 1$, we may also conclude that for $3 \le n \le N$,
\[
	- \log \Bigl( 1 - \frac{n(n-1)}{2N} \Bigr) \ = \ H(n(n-1)/2)
	\ > \ (n/2) H(n-1) \ = \ - \frac{n}{2} \log \Bigl( 1 - \frac{n-1}{N} \Bigr) .
\]
\end{proof}

\paragraph{Auxiliary inequalities.}
In what follows, we will use repeatedly the following inequalities for logarithms: For real numbers $x, a > 0$ and $b > - x$,
\begin{align}
\label{ineq:log.ratio.1}
	(x + b) \log \Bigl( \frac{x}{x + a} \Bigr) \
	&< \ - a + \frac{a(a-2b)}{2x + a} - \frac{2a^3 (x+b)}{3 (2x + a)^3} \\
\label{ineq:log.ratio.2}
	&< \ - a + \frac{a(a-2b)}{2x + a}
\intertext{and}
\label{ineq:log.ratio.3}
	(x + a/2) \log \Bigl( \frac{x}{x + a} \Bigr) \
	&> \ - a - \frac{a^3}{12 x(x + a)} . 
\end{align}
These inequalities follow essentially from the fact
\[
	\log \Bigl( \frac{x}{x + a} \Bigr)
	\ = \ \log \Bigl( \frac{2x + a - a}{2x + a + a} \Bigr)
	\ = \ \log \Bigl( \frac{1 - y}{1 + y} \Bigr)
	\ = \ - 2 \sum_{\ell=0}^\infty \frac{y^{2\ell+1}}{2\ell + 1}
	\ < \ -2y - \frac{2y^3}{3}
\]
with $y := a/(2x + a)$, where the Taylor series expansion in the second to last step is well-known and follows from the usual expansion $\log(1 \pm y) = - \sum_{k=1}^\infty (\mp y)^k / k$. Then it follows from $x + b > 0$ that
\[
	(x + b) \log \Bigl( \frac{x}{x + a} \Bigr)
	\ < \ - \frac{2a(x + b)}{2x + a} - \frac{2a^3(x + b)}{3 (2x + a)^3}
	\ = \ - a + \frac{a(a - 2b)}{2x + a} - \frac{2a^3(x + b)}{3 (2x + a)^3} ,
\]
whereas
\begin{align*}
	(x + a/2) \log \Bigl( \frac{x}{x+a} \Bigr) \
	&= \ \frac{a}{2y} \log \Bigl( \frac{1 - y}{1 + y} \Bigr)
		\ = \ - a \sum_{\ell=0}^\infty \frac{y^{2\ell}}{2\ell + 1} \\
	&> \ - a - \frac{ay^2}{3(1 - y^2)}
		\ = \ - a - \frac{a^3}{12x (x + a)} .
\end{align*}

Here is another expression which will be encountered several times: For $\delta \in [0,1]$,
\[
	1 - \sqrt{1 - \delta} \ = \ \frac{\delta}{1 + \sqrt{1 - \delta}}
	\ = \ \frac{\delta}{2 - (1 - \sqrt{1 - \delta})}
	\ = \ \cdots \ = \ \frac{\delta}{2 - \frac{\delta}{2 - \frac{\delta}{2 - \cdots}}} ,
\]
and the inequality $\sqrt{1 - \delta} \ge 1 - \delta$ implies that
\begin{equation}
\label{eq:sqrt.delta}
	1 - \sqrt{1 - \delta} \ \le \ \frac{\delta}{2 - \delta}
	\ = \ \frac{\delta}{2} \Bigl( 1 - \frac{\delta}{2} \Bigr)^{-1}
	\ = \ \frac{\delta}{2} + \frac{\delta^2}{4 - 2\delta} .
\end{equation}

Recall that we write $[a]_0 := 1$ and $[a]_m := \prod_{i=0}^{m-1} (a - i)$ for real numbers $a$ and integers $m \ge 1$. In particular, $\binom{n}{k} = [n]_k / k!$ for integers $0 \le k \le n$.

\begin{proof}[\bf Proof of Theorem~\ref{thm:HypBin}]
The assertions are trivial in case of $n = 1$ or $L \in \{0,N\}$, because then $\Hyp(N,L,n) = \Bin(n,L/N)$. Hence it suffices to consider $n \ge 2$ and $1 \le L \le N-1$. For $k \in \{0,1,\ldots,n\}$ let 
\begin{align*}
	h(k) = h_{N,L,k}(k) \
	:= \ &\Hyp(N,L,n)(\{k\})
		\ = \ \binom{L}{k} \binom{N-L}{n-k} \Big/ \binom{N}{n} \\
	 = \ &\binom{n}{k} \frac{[L]_k [N-L]_{n-k}}{[N]_n} , \\
	b(k) = b_{n,L/N}(k) \
	:= \ &\Bin(n,L/N)(\{k\})
		\ = \ \binom{n}{k} (L/N)^k (1 - L/N)^{n-k} \\
	 = \ &\binom{n}{k} \frac{L^k (N-L)^{n-k}}{N^n}
\intertext{and}
	r(k) = r_{N,L,n}(k) \
	:= \ &\frac{h(k)}{b(k)}
		\ = \ \frac{[L]_k [N - L]_{n-k} N^n}{L^k (N-L)^{n-k} [N]_n} .
\end{align*}
Since
\[
	r_{N,N-L,n}(n-k) \ = \ r_{N,L,n}(k) ,
\]
it even suffices to consider
\[
	n \ge 2
	\quad\text{and}\quad
	1 \le L \le N/2 .
\]
In this case, $r(k) > 0$ for $1 \le k \le \min(n,L)$, and $r(k) = 0$ for $\min(n,L) < k \leq n$.

In order to maximize the weight ratio $r$, note that for any integer $0 \le k < \min(L,n)$,
\[
	\frac{r(k+1)}{r(k)} \ = \ \frac{(L - k)(N - L)}{L(N - L - n + k + 1)}
	\ \left\{\!\!\begin{array}{c} \le \\ > \end{array} \!\! \right\} \ 1
\]
if and only if
\[
	k \ \left\{\!\!\begin{array}{c} \ge \\ < \end{array}\!\!\right\} \ \frac{(n-1)L}{N} .
\]
Consequently,
\begin{align*}
	\rho \bigl( &\Hyp(N,L,n), \Bin(n,L/N) \bigr) \ = \ r_{N,L,n}(k) \\
	&\text{with}\quad
		k = k_{N,L,n} \ := \ \Bigl\lceil \frac{(n-1)L}{N} \Bigr\rceil
		\ \in \ \{1,\ldots,n-1\} .
\end{align*}

The worst-case value $k_{N,L,n}$ equals $1$ if and only if $L \le N/(n-1)$. But
\begin{align*}
	r_{N,L,n}(1) \
	&= \ \frac{[N - L]_{n-1} N^n}{(N - L)^{n-1} [N]_n} 
		\ = \ \prod_{i=0}^{n-2} \Bigl( 1 - \frac{i}{N - L} \Bigr)
			\frac{N^n}{[N]_{n}} \\
	&\le \ \prod_{i=0}^{n-2} \Bigl( 1 - \frac{i}{N - 1} \Bigr)
			\frac{N^n}{[N]_{n}} 
	    \ = \ (1 - 1/N)^{-(n-1)} \ = \ r_{N,1,n}(1) .
\end{align*}
Consequently, it suffices to consider
\[
	N/(n-1) < L \le N/2 .
\]
Note that these inequalities for $L$ imply that $n - 1 > 2$. Hence it remains to prove the assertions when $n \ge 4$ and $N/(n-1) < L \le N/2$.

The case $n = 4$ is treated separately: Here it suffices to show that
\[
	r_{N,L,4}(2) \ \le \ r_{N,1,4}(1)
	\quad\text{for} \ N \ge 6 \ \text{and} \ 1 < L \le N/2 .
\]
Indeed
\begin{align*}
	\frac{r_{N,L,4}(2)}{r_{N,1,4}(1)} \
	&= \ \frac{[L]_2[N-L]_2 (N-1)^3}{L^2 (N - L)^2 [N-1]_3} 
	      \ = \ \frac{(L-1)(N-L-1) (N - 1)^2}{L(N-L) (N-2) (N-3)} \\
	&= \ \frac{(L(N-L) - N + 1)(N-1)^2}{L(N-L) ((N-1)^2 - 3N + 5)} 
	      \ = \ \Bigl( 1 - \frac{N-1}{L(N-L)} \Bigr)
		    \Big/ \Bigl( 1 - \frac{3N - 5}{(N-1)^2} \Bigr) \\
	&\le \ \Bigl( 1 - \frac{4(N-1)}{N^2} \Bigr)
		\Big/ \Bigl( 1 - \frac{3N - 5}{(N-1)^2} \Bigr)
\end{align*}
with equality if and only if $L = N/2$. The latter expression is less than or equal to $1$ if and only if
\[
	\frac{4(N-1)}{N^2} \ \ge \ \frac{3N - 5}{(N-1)^2} ,
\]
and elementary manipulations show that this is equivalent to
\[
	(N - 7/2)^2 + 12 - 49/4 \ \ge \ 4/N .
\]
But this inequality is satisfied for all $N \ge 5$.

Consequently, it suffices to prove our assertion in case of
\[
	n \ge 5
	\quad\text{and}\quad
	N/(n-1) < L \le N/2 .
\]
The maximizer $k = k_{N,L,n}$ of the density ratio is $k = \lceil (n-1)L/N \rceil \ge 2$, and
\[
	n - k
	\ = \ \lfloor n - (n - 1)L/N \rfloor
	\ \ge \ \lfloor n - (n - 1)/2 \rfloor
	\ = \ \lfloor (n+1)/2 \rfloor
	\ \ge \ 3 .
\]
Now our task is to bound
\begin{align*}
	\log \rho \bigl( & \Hyp(N,L,n), \Bin(n,L/N) \bigr) \\
	&= \ \log \Bigl( \frac{[L]_k}{L^k} \Bigr)
		+ \log \Bigl( \frac{[N - L]_{n-k}}{(N - L)^{n-k}} \Bigr)
		- \log \Bigl( \frac{[N]_n}{N^n} \Bigr) \\
	&= \ \log \Bigl( \frac{[L-1]_{k-1}}{L^{k-1}} \Bigr)
		+ \log \Bigl( \frac{[N - L-1]_{n-k-1}}{(N - L)^{n-k-1}} \Bigr)
		- \log \Bigl( \frac{[N-1]_{n-1}}{N^{n-1}} \Bigr)
\end{align*}
from above. Corollary~\ref{cor:log.Gamma.increments} in Section~\ref{subsec:Log.Gamma} implies that for integers $A \ge m \ge 2$,
\begin{align*}
	\log \Bigl( \frac{[A-1]_{m-1}}{A^{m-1}} \Bigr) \
	= \ &\log((A-1)!) - \log((A - m)!) - (m - 1) \log(A) \\
	= \ &(A - 1/2) \log(A) - A - (m - 1) \log(A) \\
		&- \ (A - m + 1/2) \log(A - m + 1) + A - m + 1 + s_{m,A}^{} \\
	= \ &(A - m + 1/2) \log \Bigl( \frac{A}{A - m + 1} \Bigr) + 1 - m + s_{m,A}^{} ,
\end{align*}
where
\[
	- \frac{m-1}{12A(A-m+1)} \ < \ s_{m,A} \ < \ 0 .
\]
Consequently,
\begin{align*}
	\log \rho \bigl( & \Hyp(N,L,n), \Bin(n,L/N) \bigr) \\
	< \ &(L - k + 1/2) \log \Bigl( \frac{L}{L-k+1} \Bigr)
		+ (N - L - n + k + 1/2) \log \Bigl( \frac{N - L}{N - L - n + k + 1} \Bigr) \\
		&+ \ 1 - (N - n + 1/2) \log \Bigl( \frac{N}{N-n+1} \Bigr)
			+ \frac{n-1}{12 N(N - n + 1)} .
\end{align*}
Now we introduce the auxiliary quantities
\[
	\delta \ := \ \frac{n-1}{N} , \quad \Delta \ := \ 1 - \delta \ = \ \frac{N-n+1}{N}  
\]
and write
\[
	k \ = \ (n-1)L/N + \gamma \ = \ L\delta + \gamma
	\quad\text{with} \ 0 \le \gamma < 1 .
\]
Then
\[
	L - k \ = \ L\Delta - \gamma , \quad
	N - L - n + k \ = \ (N - L) \Delta + \gamma - 1 ,
\]
whence
\begin{align*}
	&(L - k + 1/2) \log \Bigl( \frac{L}{L-k+1} \Bigr)
		+ (N - L - n + k + 1/2) \log \Bigl( \frac{N - L}{N - L - n + k + 1} \Bigr) \\
	&= \ (L\Delta + 1/2 - \gamma) \log \Bigl( \frac{L}{L\Delta + 1 - \gamma} \Bigr)
		 + \bigl((N - L)\Delta + \gamma - 1/2\bigr)
			\log \Bigl( \frac{N - L}{(N - L)\Delta + \gamma} \Bigr) \\
	&= \ (L\Delta + 1/2 - \gamma) \log \Bigl( \frac{L\Delta}{L\Delta + 1 - \gamma} \Bigr)
		 + \bigl((N - L)\Delta + \gamma - 1/2\bigr)
			\log \Bigl( \frac{(N - L)\Delta}{(N - L)\Delta + \gamma} \Bigr) \\
	&\qquad - \ (N - n + 1) \log(\Delta) .		
\end{align*}
It follows from \eqref{ineq:log.ratio.2} with $x = L\Delta$, $a = 1 - \gamma$ and $b = 1/2 - \gamma$ that
\[
	(L\Delta + 1/2 - \gamma) \log \Bigl( \frac{L\Delta}{L\Delta + 1 - \gamma} \Bigr)
	\ < \ - (1 - \gamma) + \frac{\gamma(1 - \gamma)}{2L\Delta + 1 - \gamma} ,
\]
and with $x = (N - L)\Delta$, $a = \gamma$ and $b = \gamma - 1/2$ we may conclude that
\[
	\bigl((N - L)\Delta + \gamma - 1/2\bigr)
		\log \Bigl( \frac{(N - L)\Delta}{(N - L)\Delta + \gamma} \Bigr)
	\ < \ - \gamma + \frac{\gamma(1 - \gamma)}{2(N-L)\Delta + \gamma} .
\]
Hence
\begin{align*}
	\log \rho \bigl( & \Hyp(N,L,n), \Bin(n,L/N) \bigr) \\
	< \ &- \, (1 - \gamma) + \frac{\gamma(1 - \gamma)}{2L\Delta + 1 - \gamma}
		- \gamma + \frac{\gamma(1 - \gamma)}{2(N-L)\Delta + \gamma}
		- (N - n + 1) \log(\Delta) \\
		&+ \ 1 - (N - n + 1/2) \log \Bigl( \frac{N}{N-n+1} \Bigr)
			+ \frac{n-1}{12 N(N - n + 1)} \\
	= \ & g(L) - \frac{\log(\Delta)}{2} + \frac{\delta}{12 N \Delta} ,
\end{align*}
where
\begin{align*}
	g(L) \
	:= \ & \gamma(1 - \gamma) \Bigl( \frac{1}{2L\Delta + 1 - \gamma}
		+ \frac{1}{2(N-L)\Delta + \gamma} \Bigr) \\
	< \ &\frac{1}{8L\Delta} + \frac{1}{8(N-L)\Delta} \ = \ \frac{N}{8L(N-L)\Delta} ,
\end{align*}
because $\gamma(1 - \gamma) \le 1/4$. It will be shown later that
\begin{equation}
\label{eq:Ooff}
	g(L) \ \le \ \frac{\delta}{7\Delta} .
\end{equation}
Consequently,
\begin{align*}
	\log \rho \bigl( \Hyp(N,L,n), \Bin(n,L/N) \bigr) \
	&< \ - \frac{\log(\Delta)}{2} + \frac{\delta}{7\Delta} + \frac{\delta}{12 N\Delta} \\
	&= \ - \frac{\log(1 - \delta)}{2} + \frac{\delta}{7(1 - \delta)}
		+ \frac{\delta}{12 N(1 - \delta)} \\
	&\le \ - \frac{\log(1 - \delta)}{2} + \frac{\delta}{7(1 - \delta)}
		+ \frac{\delta}{6 N} ,
\end{align*}
because $\delta \le 1/2$, and we want to show that the right-hand side is not greater than
\[
	- (n - 1) \log(1 - 1/N) \ = \ (n-1) \sum_{\ell=1}^\infty \frac{1}{\ell N^\ell}
	\ > \ \delta + \frac{\delta}{2N} .
\]
Hence, it suffices to show that
\[
	- \frac{\log(1 - \delta)}{2} + \frac{\delta}{7(1 - \delta)} - \delta \ \le \ 0 .
\]
But the left-hand side is a convex function of $\delta \in [0,1/2]$ and takes the value $0$ for $\delta = 0$. Thus it suffices to verify that the latter inequality holds for $\delta = 1/2$. Indeed, for $\delta = 1/2$, the left-hand side is $\log(2)/2 + 1/7 - 1/2 = (\log(2) - 5/7)/2 < 0$.

It remains to verify \eqref{eq:Ooff}. When $k = \lceil L\delta \rceil \ge 3$, this is relatively easy: Here $2\delta^{-1} < L \le N/2$, so
\[
	L(N - L) \ > \ 2\delta^{-1}(N - 2\delta^{-1}) \ = \ 2 N \delta^{-1} \frac{n-3}{n-1}
	\ \ge \ N \delta^{-1} ,
\]
because $n \ge 5$. Hence,
\[
	g(L) < \frac{N}{8L(N - L)\Delta} \ < \ \frac{\delta}{8\Delta} .
\]
The case $k = 2$ is a bit more involved: Since
\[
	g(L)
	\ = \ \frac{\gamma(1 - \gamma) (2N\Delta + 1)}
		{(2L\Delta + 1 - \gamma)(2(N-L)\Delta + \gamma)} ,
\]
inequality \eqref{eq:Ooff} is equivalent to
\begin{equation}
\label{eq:Ooff2}
	7 \gamma(1 - \gamma) (2N\Delta^2 + \Delta)
	\ \le \ (2L\Delta + 1 - \gamma)(2(N-L)\Delta + \gamma) \delta .
\end{equation}
The left-hand side of \eqref{eq:Ooff2} equals
\[
	14 \gamma(1 - \gamma) N\Delta^2 + 7 \gamma(1 - \gamma) \Delta
	\ \le \ 14 \gamma(1 - \gamma) N \Delta^2 +2 \Delta ,
\]
because $7 \gamma(1 - \gamma) \le 7/4 < 2$, while the right-hand of \eqref{eq:Ooff2} side equals
\begin{align*}
	4 L& (N - L) \Delta^2 \delta + 2 ((1 - \gamma)(N-L) + \gamma L) \Delta \delta
		+ \gamma(1 - \gamma) \delta \\
	&\ge \ 4 L(N - L) \Delta^2 \delta + 2 L\delta \Delta
		\ > \ 4L(N - L) \Delta^2 \delta + 2 \Delta ,
\end{align*}
because $N - L \ge L$ and $L\delta > 1$. Consequently, it suffices to verify that
\begin{equation}
\label{eq:Ooff3}
	7 \gamma(1 - \gamma) N \ \le \ 2 L (N - L) \delta .
\end{equation}
To this end, note that $\gamma$ depends on $L$, namely, $\gamma = 2 - L\delta$, whence $L = (2 - \gamma) \delta^{-1}$ and
\[
	2 L(N - L)\delta \ = \ 2 (2 - \gamma) (N - (2 - \gamma) \delta^{-1})
	\ = \ 2 (2 - \gamma)(n - 1 - (2 - \gamma)) \delta^{-1} ,
\]
so \eqref{eq:Ooff3} is equivalent to
\begin{equation}
\label{eq:Ooff4}
	2 (2 - \gamma)(n-3 + \gamma) - 7 \gamma(1 - \gamma)(n-1) \ \ge \ 0 .
\end{equation}
But the left-hand side is
\begin{eqnarray*}
\lefteqn{4(n - 3) - 2\gamma (4.5 n - 8.5) + \gamma^2 (7n - 9) } \\
	&& \ge \ 4(n - 3) - \frac{(4.5n - 8.5)^2}{7n - 9} 
	      \ = \ \frac{4(n - 3)(7n - 9) - (4.5n - 8.5)^2}{7n - 9} .
\end{eqnarray*}
For $n \ge 5$, the denominator is strictly positive, and the derivative of the numerator is $15.5n - 43.5$, which is strictly positive, too. Thus it suffices to verify that the numerator is nonnegative for $n = 5$. Indeed, $4(n - 3)(7n - 9) - (4.5n - 8.5)^2 = 12$ for $n = 5$.

Finally, it follows from Bernoulli's inequality\footnote{$(1 + x)^m \ge 1 + mx$ for real numbers $x > -1$ and $m \ge 1$} 
that $(1 - 1/N)^{-(n-1)} \le (1 - (n-1)/N)^{-1}$. Now the inequalities for the total variation distance are an immediate consequence of Proposition~\ref{prop:rho.and.divergences}~(a) with $\psi(t) = (1 - t^{-1})_+$ and the fact that $Q(\{0\}) \le P(\{0\})$ and $Q(\{n\}) \le P(\{n\})$, whence
\[
	Q(\{g>f\})
	\ \le \ 1 - Q(\{0\}) - Q(\{n\})
	\ = \ 1 - \frac{[N-L]_n}{[N]_n} - \frac{[L]_n}{[N]_n} .
\]\\[-5ex]
\end{proof}

\begin{proof}[\bf Proof of Theorem~\ref{thm:BinPoiss1}]
Obviously, $\Lambda_n(0) = 0$. For $k \in \mathbb{N}_0$ we introduce the weights 
$b(k) = b_{n,p}(k) := \Bin(n,p)(\{k\})$ and $\pi(k) = \pi_{np}(k) := \Poiss(np)(\{k\}) = e^{-np} (np)^k / k!$. 
Obviously, $b(k) = 0$ for $k > n$, while for $0 \le k \le n$ and $p \in (0,1)$,
\[
	\lambda_{n,p}(k) \ := \ \log \frac{b(k)}{\pi(k)}
	\ = \ \log \Bigl( \frac{[n]_k}{n^k} \Bigr) + np + (n - k)\log(1 - p) .
\]
Note that the right hand side is a continuous function of $p \in [0,1)$ with limit 
$\lambda_{n,0}(k) := \log([n]_k/n^k) \le 0$ as $p \to 0$, where $\lambda_{n,0}(0) = 0$. Thus we may conclude that
\[
	\Lambda_n(p) \ = \ \max_{k = 0,1,\ldots,n} \lambda_{n,p}(k)
\]
is a continuous function of $p \in [0,1)$.

Next we need to determine the maximizer of $\lambda_{n,p}(\cdot)$. For $k \in \{0,1\ldots,n-1\}$,
\[
	\lambda_{n,p}(k+1) - \lambda_{n,p}(k)
	\ = \ \log(1 - k/n) - \log(1 - p)
	\ \begin{cases}
		\ge \ 0 & \text{if} \ k \le np , \\
		\le \ 0 & \text{if} \ k \ge np .
	\end{cases}
\]
Consequently,
\[
	\Lambda_n(p) \ = \ \lambda_{n,p}(\lceil np\rceil) .
\]

From now on we fix an integer $k \in \{1,\ldots,n\}$ and focus on 
$p \in \bigl[ (k-1)/n, k/n \bigr]$, so that $k = \lceil np\rceil$ if $p > (k-1)/n$. Then
\[
	\Lambda_n(p) \ = \ \log \Bigl( \frac{[n]_k}{n^k} \Bigr) + np + (n - k) \log(1 - p) .
\]
This is a concave function of $p$ with derivative
\[
	n - \frac{n-k}{1 - p}
	\ = \ \frac{k - np}{1 - p}
	\ \begin{cases}
		< \ 1/(1 - p) \\
		> \ 0
	\end{cases}
\]
if $(k-1)/n < p < k/n$. Since $1/(1 - p)$ is the derivative of $- \log(1 - p)$ with respect to $p$, and since $\Lambda_n(0) = 0 = - \log(1 - 0)$, this implies that
\[
	\Lambda_n(p) \ < \ - \log(1 - p)
	\quad\text{for} \ p \in (0,1) .
\]
On the other hand, $\Lambda_n$ is strictly increasing, whence
\[
	\Lambda_n(p) \ \le \ \Lambda_n(k/n) .
\]
But Corollary~\ref{cor:log.Gamma.increments} in Section~\ref{subsec:Log.Gamma} implies that
\[
	\log \Bigl( \frac{[n]_k}{n^k} \Bigr)
	\ = \ \log \Bigl( \frac{[n-1]_{k-1}}{n^{k-1}} \Bigr)
	\ = \ (n - k + 1/2) \log \Bigl( \frac{n}{n-k+1} \Bigr) + 1 - k + s_{k,n}
\]
with
\[
	- \frac{k-1}{12n(n-k+1)}
	\ < \ s_{k,n}
	\ < \ \min \Bigl( 0, - \frac{k-1}{12n(n-k+1)} + \frac{1}{12^2(n-k+1)^2} \Bigr) .
\]
Consequently,
\begin{align*}
	\Lambda_n(k/n) \
	&= \ \log \Bigl( \frac{[n]_k}{n^k} \Bigr) + k + (n - k)\log(1 - k/n) \\
	&\le \ (n - k + 1/2) \log \Bigl( \frac{n - k}{n-k+1} \Bigr) + 1
		- \frac{\log(1 - k/n)}{2} \\
	&< \ - \frac{\log(1 - k/n)}{2} ,
\end{align*}
where the last inequality follows from \eqref{ineq:log.ratio.2} with $x = n - k$, $a = 1$, and $b = 1/2$.

The refined bounds are for the quantity
\[
	D_n(p) \ := \ \Lambda_n(p) + \log(1 - p)/2 .
\]
For $p \in \bigl[(k-1)/n, k/n \bigr]$,
\[
	D_n(p) \ = \ \log \Bigl( \frac{[n]_k}{n^k} \Bigr) + np + (n - k + 1/2) \log(1 - p)
\]
and
\[
	D_n'(p) \ = \ n - \frac{n - k + 1/2}{1 - p}
	\ = \ \frac{k - 1/2 - np}{1 - p}
	\ \begin{cases}
		\ge \ 0 & \text{if} \ p \le (k - 1/2)/n , \\
		\le \ 0 & \text{if} \ p \ge (k - 1/2)/n .
	\end{cases}
\]
Consequently,
\begin{align*}
	D_n(p) \
	&\le \ D_n \Bigl( \frac{k-1/2}{n} \Bigr) \\
	&\le \ (n - k + 1/2) \log \Bigl( \frac{n - k + 1/2}{n-k+1} \Bigr)
		+ \frac{1}{2} - \frac{k-1}{12n(n-k+1)} + \frac{1}{12^2(n-k+1)^2} .
\end{align*}
It follows from \eqref{ineq:log.ratio.1} with $x = n - k+1/2$, $a = 1/2$ and $b = 0$ that
\begin{align*}
	(n - k + 1/2) \log \Bigl( \frac{n - k + 1/2}{n-k+1} \Bigr) + \frac{1}{2} \
	&= \ x \log \Bigl( \frac{x}{x + a} \Bigr) + a \\
	&< \ \frac{a^2}{2x + a} - \frac{2a^3 x}{3(2x + a)^3} \\
	&< \ \frac{1}{8(n - k) + 6} - \frac{n - k + 1/2}{12 \cdot 8 (n - k + 3/4)^3} ,
\end{align*}
and with $y := n - k + 3/4 \ge 3/4$,
\begin{align*}
	&\frac{n - k + 1/2}{12 \cdot 8 (n - k + 3/4)^3}
		\Big/ \frac{1}{12^2(n - k + 1)^2} \\
	&\ = \ \frac{3(y - 1/4)(y + 1/4)^2}{2 y^3}
		 \ > \ \frac{3(y^2 - 1/16)}{2y^2}
		 \ \ge \ \frac{4}{3} \ \ge \ 1 .
\end{align*}
Hence
\[
	D_n(p) \ \le \ \frac{1}{8(n - k) + 6} - \frac{k-1}{12n(n-k+1)} .
\]
On the other hand, the lower bound for $D_n(p)$ in \eqref{ineq:BinPoiss1} is trivial in case of $k = n$, and otherwise
\begin{align*}
	D_n(p) \
	&\ge \min_{j = k-1,k} D_n(j/n) \\
	&= \ \min_{j = k-1,k}
		\Bigl( (n - k + 1/2) \log \Bigl( \frac{n - j}{n - k + 1} \Bigr)
			+ 1 - k + j \Bigr) + s_{k,n} \\
	&> \ (n - k + 1/2) \log \Bigl( \frac{n - k}{n - k + 1} \Bigr) + 1
		- \frac{k-1}{12n(n-k+1)} \\
	&> \ - \frac{1}{12(n - k)(n - k + 1)} - \frac{k-1}{12n(n - k + 1)}
\end{align*}
by \eqref{ineq:log.ratio.3} with $x = n - k$ and $a = 1$.
\end{proof}

\begin{proof}[\bf Proof of Theorem~\ref{thm:Beta.Gamma}]
We start with the first statement of part~(ii). Let $\beta := \beta_{a,b}$ and $\gamma_c := \gamma_{a,c}$ for $c > 0$. Since $\beta(x) = 0$ for $x \ge 1$, it suffices to consider the log-density ratio
\[
	\lambda_c(x) := \log \frac{\beta}{\gamma_c}(x)
	\ = \ \log \frac{\Gamma(a+b)}{\Gamma(b)} - a \log c + (b - 1) \log(1 - x) + c x
\]
for $0 \le x < 1$, noting that the latter expression for $\lambda_c(x)$ is well-defined for all $x < 1$. The derivative of $\lambda_c$ equals
\[
	c - \frac{b - 1}{1 - x}
	\ = \ \frac{c}{1 - x} \Bigl( 1 - x - \frac{b-1}{c} \Bigr)
	\ = \ \frac{c}{1 - x} \Bigl( \frac{c-b+1}{c} - x \Bigr) ,
\]
and this is smaller or greater than zero if and only if $x$ is greater or smaller than the ratio $(c-b+1)/c$, respectively. This shows that in case of $c \le b-1$,
\begin{align*}
	\log \rho \bigl( \mathrm{Beta}(a,b), \mathrm{Gamma}(a,c) \bigr)
		\ = \ \lambda_c(0) \
	&= \ \log \frac{\Gamma(a+b)}{\Gamma(b)} - a \log c \\
	&\ge \ \log \frac{\Gamma(a+b)}{\Gamma(b)} - a \log(b-1) \\
	&= \ \log \rho \bigl( \mathrm{Beta}(a,b), \mathrm{Gamma}(a,b-1) \bigr) .
\end{align*}
For $c \ge b-1$,
\begin{align}
\nonumber
	\log \rho \bigl( & \mathrm{Beta}(a,b), \mathrm{Gamma}(a,c) \bigr)
		\ = \ \lambda_c \Bigl( \frac{c-b+1}{c} \Bigr) \\
\label{eq:Beta.Gamma}
	&= \ \log \frac{\Gamma(a+b)}{\Gamma(b)}
		- (a + b - 1) \log c + (b - 1) \log(b - 1)
		+ c - b + 1 .
\end{align}
But the derivative of the latter expression with respect to $c \ge b-1$ equals
\[
	1 - \frac{a+b-1}{c} ,
\]
so the unique minimizer of $\log \rho \bigl( \mathrm{Beta}(a,b), \mathrm{Gamma}(a,c) \bigr)$ with respect to $c > 0$ is $c = a+b-1$.

It remains to verify the inequalities
\begin{align}
\label{ineq:Beta.Gamma.1}
	\log \rho \bigl( \mathrm{Beta}(a,b), \mathrm{Gamma}(a,a+b) \bigr) \
	&\le \ - \frac{\log(1 - \delta)}{2} , \\
\label{ineq:Beta.Gamma.2}
	\log \rho \bigl( \mathrm{Beta}(a,b), \mathrm{Gamma}(a,a+b-1) \bigr) \
	&\le \ - \frac{\log(1 - \tilde{\delta})}{2} .
\end{align}
Then the total variation bounds of Theorem~\ref{thm:Beta.Gamma} follow from Proposition~\ref{prop:rho.and.divergences}~(a) and the elementary inequality \eqref{eq:sqrt.delta}. Corollary~\ref{cor:log.Gamma.increments} in Section~\ref{subsec:Log.Gamma} implies that
\begin{equation}
\label{ineq:Beta.Gamma.3}
	\log \frac{\Gamma(a+b)}{\Gamma(b)}
	\ < \ (a + b - 1/2) \log(a + b) - (b - 1/2) \log(b) - a .
\end{equation}
Combining this with \eqref{eq:Beta.Gamma} yields \eqref{ineq:Beta.Gamma.1}:
\begin{align*}
	\log \rho \bigl( &\mathrm{Beta}(a,b), \mathrm{Gamma}(a, a+b) \bigr) \\
	&= \ \log \frac{\Gamma(a+b)}{\Gamma(b)}
		- (a+b-1) \log(a+b) + (b-1) \log(b-1) + a + 1 \\
	&< \ \frac{\log(a + b)}{2} - \frac{\log(b - 1)}{2}
		+ 1 + (b - 1/2) \log \Bigl( \frac{b-1}{b} \Bigr) \\
	&= \ - \frac{\log(1 - \delta)}{2}
		+ 1 + (b - 1/2) \log \Bigl( \frac{b-1}{b} \Bigr) \\
	&< \ - \frac{\log(1 - \delta)}{2} ,
\end{align*}
by \eqref{ineq:log.ratio.2} with $(x,a,b) = (b-1,a,1/2)$. Concerning \eqref{ineq:Beta.Gamma.2}, if follows from \eqref{eq:Beta.Gamma} and \eqref{ineq:Beta.Gamma.3} that
\begin{align*}
	\log \rho \bigl( &\mathrm{Beta}(a,b), \mathrm{Gamma}(a, a+b-1) \bigr) \\
	&= \ \log \frac{\Gamma(a+b)}{\Gamma(b)}
		- (a+b-1) \log(a+b-1) + (b-1) \log(b-1) + a \\
	&< \ \frac{\log(a + b)}{2} - \frac{\log(b - 1)}{2}
		- (a + b - 1/2) \log \Bigl( \frac{a+b-1}{a+b} \Bigr)
		+ (b - 1/2) \log \Bigl( \frac{b-1}{b} \Bigr) \\
	&= \ - \frac{\log(1 - \tilde{\delta})}{2}
		+ \frac{1}{2} \Bigl( A \log \Bigl( \frac{1 - 1/A}{1 + 1/A} \Bigr)
			- B \log \Bigl( \frac{1 - 1/B}{1 + 1/B} \Bigr) \Bigr) ,
\end{align*}
where $A := 2b - 1$ and $B := 2(a + b) - 1$. Now \eqref{ineq:Beta.Gamma.2} follows from
\[
	A \log \Bigl( \frac{1 - 1/A}{1 + 1/A} \Bigr)
		- B \log \Bigl( \frac{1 - 1/B}{1 + 1/B} \Bigr)
	\ = \ \sum_{\ell=0}^\infty \frac{B^{-2\ell} - A^{-2\ell}}{2\ell + 1}
	\ < \ 0 ,
\]
because $A < B$.

In the special case of $a = 1$, we do not need \eqref{ineq:Beta.Gamma.3} but get via \eqref{eq:Beta.Gamma} the explicit expression
\begin{align*}
	\log \rho \bigl( \mathrm{Beta}(1,b), \mathrm{Gamma}(1, b) \bigr) \
	&= \ \log \frac{\Gamma(b+1)}{\Gamma(b)}
		- b \log(b) + (b-1) \log(b-1) + 1 \\
	&= \ (b - 1) \log(1 - 1/b) + 1 ,
\end{align*}
because $\Gamma(b+1) = b \Gamma(b)$. Now the standard Taylor series for $\log(1 - x)$ yields that
\begin{align*}
	\log \rho \bigl(& \mathrm{Beta}(1,b), \mathrm{Gamma}(1, b) \bigr) \\
	&= \ - (b - 1) \sum_{\ell=1}^\infty \frac{b^{-\ell}}{\ell} + 1
		\ = \ \sum_{\ell=1}^\infty
			\Bigl( \frac{b^{-\ell}}{\ell} - \frac{b^{-\ell}}{\ell+1} \Bigr)
		\ = \ \sum_{\ell=1}^\infty \frac{b^{-\ell}}{\ell(\ell+1)} \\
	&< \ \frac{1}{2b} + \frac{1}{6b^2} + \frac{1}{12 b^3} \sum_{j=0}^\infty b^{-j}
		\ = \ \frac{1}{2b} + \frac{1}{6b^2} + \frac{1}{12 b^2(b - 1)} ,
\end{align*}
and in case of $b \ge 2$, the latter expression is not larger than
\[
	\frac{1}{2b} + \frac{1}{6b^2} + \frac{1}{12b^2}
		\ = \ \frac{1}{2b} + \frac{1}{4b^2} .
\]\\[-5ex]
\end{proof}

\begin{proof}[\bf Proof of Lemma~\ref{lem:N01.student}]
By Proposition~\ref{prop:rho.and.divergences}~(a) and the inequality $1 - \exp(-x) \le x$ for $x \ge 0$, it suffices to verify the claims about $\log \rho \bigl( N(0,1), t_r \bigr)$. Note first that
\[
	\log \frac{\phi(x)}{f_r(x)}
	\ = \ \log \frac{\Gamma(r/2) \sqrt{r/2}}{\Gamma((r+1)/2)}
		+ \frac{r+1}{2} \log \Bigl( 1 + \frac{x^2}{r} \Bigr) - \frac{x^2}{2}
\]
and
\[
	\frac{\partial}{\partial (x^2)} \, \log \frac{\phi(x)}{f_r(x)}
	\ = \ \frac{r+1}{2(r + x^2)} - \frac{1}{2}
	\ = \ \frac{1 - x^2}{2(r + x^2)} ,
\]
whence
\[
	\log \rho \bigl( N(0,1), t_r \bigr)
	\ = \ \log \frac{\Gamma(r/2) \sqrt{r/2}}{\Gamma((r+1)/2)}
		- \frac{1}{2} + \frac{r+1}{2} \log \Bigl( 1 + \frac{1}{r} \Bigr) .
\]
On the one hand, the Taylor expansion $- \log(1 - x) = \sum_{k=1}^\infty x^k/k$ yields that
\begin{align*}
	- \frac{1}{2} + \frac{r+1}{2} \log \Bigl( 1 + \frac{1}{r} \Bigr) \
	&= \ - \frac{1}{2}
		- \frac{r+1}{2} \log \Bigl( \frac{r}{r+1} \Bigr) 
	      \ = \ - \frac{1}{2} + \frac{r+1}{2} \sum_{k=1}^\infty \frac{1}{k (r+1)^k} \\ 
	&= \ \frac{1}{2} \sum_{k=2}^\infty \frac{1}{k (r+1)^{k-1}} ,
\end{align*}
and the latter series equals
\begin{eqnarray*}
\lefteqn{\frac{1}{4(r+1)}
	+ \frac{1}{2(r+1)^2} \sum_{\ell=0}^\infty \frac{1}{(\ell+3) (r+1)^\ell }} \\
	& < & \ \frac{1}{4(r+1)} + \frac{1}{6(r+1)^2} \sum_{\ell=0}^\infty (r+1)^{-\ell}  
	         \  =  \ \frac{1}{4(r+1)} + \frac{1}{6(r+1)^2 (1 - (r+1)^{-1})}\\
	& =  &\ \frac{1}{4(r+1)} + \frac{1}{6(r+1) r} 
	         \ = \ \frac{1}{4r} - \frac{1}{4r(r+1)} + \frac{1}{6(r+1) r} 
	         \ =  \ \frac{1}{4r} - \frac{1}{12r(r+1)} .
\end{eqnarray*}
Moreover, it follows from Lemma~\ref{lem:Ex.sqrt.Gamma} in Section~\ref{subsec:Log.Gamma} with $x := r/2$ that
\begin{align*}
	\log \frac{\Gamma(r/2) \sqrt{r/2}}{\Gamma((r+1)/2)}
	\ < \ \frac{1}{4r} + \frac{1}{12 r(r^2 - 1)}
	\ &= \ \frac{1}{4r} + \frac{1}{12 r(r+1)(r-1)} \\
	&\le \ \frac{1}{4r} + \frac{1}{12 r(r+1)} ,
\end{align*}
because $r-1 \ge 1$ by assumption. Consequently,
\[
	\log \rho \bigl( N(0,1), t_r \bigr)
	\ < \ \frac{1}{2r} .
\]
On the other hand, the previous considerations and Lemma~\ref{lem:Ex.sqrt.Gamma} imply that
\[
	- \frac{1}{2} + \frac{r+1}{2} \log \Bigl( 1 + \frac{1}{r} \Bigr)
	\ > \ \frac{1}{4(r+1)}
\]
and
\[
	\log \frac{\Gamma(r/2) \sqrt{r/2}}{\Gamma((r+1)/2)}
	\ > \ \frac{1}{4(r+1)} ,
\]
whence
\[
	\log \rho(N(0,1), t_r) \ > \ \frac{1}{2(r+1)} .
\]\\[-5ex]
\end{proof}

%----------------------------------------------------
\subsection{Auxiliary Results for the Gamma Function}
\label{subsec:Log.Gamma}
%----------------------------------------------------

In what follows, let
\[
	h(x) \ := \ \log \Gamma(x)
	\ = \ \log \int_0^\infty t^{x-1} e^{-t} \, dt ,
	\quad x > 0 .
\]
With a random variable $Y_x \sim \mathrm{Gamma}(x,1)$ one may write
\[
	h'(x) \ = \ \Ex (\log Y_x)
	\quad\text{and}\quad
	h''(x) \ = \ \Var(\log Y_x) .
\]
The functions $h'$ and $h''$ are known as the digamma and trigamma functions; see e.g.,  \nocite{NIST_2010}{Olver et al.\ (2010)}, Section~5.15. This shows that $h(x)$ is strictly convex in $x > 0$. Moreover, it follows from concavity of $\log(\cdot)$ and Jensen's inequality that
\[
	h'(x) \ < \ \log \Ex(Y_x) \ = \ \log x .
\]
The well-known identity $\Gamma(x+1) = x \Gamma(x)$ is equivalent to
\[
	h(x+1) - h(x) \ = \ \log x .
\]

\paragraph{Binet's first formula and Stirling's approximation.}
Binet's first integral formula states that
\begin{equation}
\label{eq:Binet}
	h(x) \ = \ \log \sqrt{2\pi} + (x - 1/2) \log x - x + R(x) ,
\end{equation}
where
\[
	R(x) \ := \ \int_0^\infty e^{-tx} w(t) \, dt
	\quad\text{and}\quad
	w(t) \ := \ \frac{1}{t} \Bigl( \frac{1}{2} - \frac{1}{t} + \frac{1}{e^t-1} \Bigr) ,
\]
see Chapter~12 of \cite{Whittaker_Watson_1996}. The following lemma provides a lower and upper bound for $w(t)$, and these yield rather precise bounds for the remainder $R(x)$.

\begin{Lemma}
\label{lem:Binet.Stirling}
For arbitrary $t > 0$,
\[
	12^{-1} e^{-t/12} \ < \ w(t) \ < \ 12^{-1} .
\]
In particular, the remainder $R(x)$ in Binet's formula \eqref{eq:Binet} is strictly decreasing in $x > 0$ and satisfies
\[
	(12x + 1)^{-1} \ < \ R(x) \ < \ (12x)^{-1} .
\]
\end{Lemma}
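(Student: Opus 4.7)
The starting point is the partial-fraction identity
\[
    w(t) \ = \ \sum_{n=1}^\infty \frac{2}{t^2 + 4n^2\pi^2}
    \ = \ \frac{\coth(t/2)}{2t} - \frac{1}{t^2} ,
\]
which follows from the Mittag--Leffler expansion $\coth(y) - 1/y = 2y\sum_{n\ge 1}(y^2 + n^2\pi^2)^{-1}$ specialised at $y = t/2$, combined with the identity $1/(e^t - 1) = \tfrac{1}{2}\coth(t/2) - \tfrac{1}{2}$. This representation displays $w$ as smooth, strictly positive, and strictly decreasing on $(0,\infty)$, whence the upper bound is immediate, since each term is maximised at $t = 0$:
\[
    w(t) \ < \ w(0) \ = \ \frac{1}{2\pi^2}\sum_{n=1}^\infty n^{-2} \ = \ \frac{1}{12}.
\]

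The lower bound $w(t) > e^{-t/12}/12$ is the main technical point. I would study $F(t) := 12 w(t) - e^{-t/12}$. Using the asymptotic Bernoulli expansion $12 w(t) = 1 - t^2/60 + O(t^4)$ together with $e^{-t/12} = 1 - t/12 + O(t^2)$ one sees that $F(0) = 0$ and $F(t) = t/12 + O(t^2)$, so $F > 0$ on a right-neighbourhood of $0$. For large $t$ the polynomial decay $w(t) \sim 1/(2t)$ dominates $e^{-t/12}$, so $F > 0$ there too. To exclude an interior zero, I would argue by contradiction: at a candidate $t^* > 0$ where $F$ attains a non-positive minimum one has $F(t^*) \le 0$, $F'(t^*) = 0$, $F''(t^*) \ge 0$, which, using $e^{-t^*/12} \ge 12 w(t^*)$, translate into
\[
    -\frac{w'(t^*)}{w(t^*)} \ \ge \ \frac{1}{12}
    \qquad\text{and}\qquad
    \frac{w''(t^*)}{w(t^*)} \ \ge \ \frac{1}{144}.
\]
Writing both ratios in terms of the partial-fraction series and applying a sharpened Cauchy--Schwarz estimate on the resulting sums, with the disproportionate weight of the $n = 1$ term exploited at the candidate $t^*$, should then yield the required contradiction.

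Granting the bounds on $w$, the conclusions about $R$ come almost for free. Differentiating under the integral in \eqref{eq:Binet} gives $R'(x) = -\int_0^\infty t\, e^{-tx} w(t)\, dt < 0$, so $R$ is strictly decreasing. Integrating the pointwise inequalities $e^{-t/12}/12 < w(t) < 1/12$ against $e^{-tx}$ yields
\[
    \frac{1}{12x+1} \ = \ \frac{1}{12}\int_0^\infty e^{-(x + 1/12)t}\, dt
    \ < \ R(x) \ < \ \frac{1}{12}\int_0^\infty e^{-tx}\, dt \ = \ \frac{1}{12x} .
\]

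The principal obstacle is the lower bound on $w$: the margin $12 w(t) - e^{-t/12}$ is numerically quite narrow for moderate $t$ (of the order of $10\%$), so neither term-by-term comparison of Taylor expansions nor a crude Cauchy--Schwarz estimate on the partial-fraction series is sharp enough. A carefully chosen auxiliary function that exploits the specific structure of $w$ appears to be essential to push through the contradiction argument above.
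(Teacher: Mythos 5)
Your upper bound and the final transfer to $R(x)$ are sound. The Mittag--Leffler representation $w(t) = \sum_{n\ge 1} 2/(t^2+4n^2\pi^2)$ is valid and exhibits $w$ as strictly decreasing with $w(0^+) = \frac{1}{2\pi^2}\sum_{n\ge1}n^{-2} = 1/12$, which proves $w(t) < 1/12$ by a genuinely different (and slicker) route than the paper, which instead writes $w(t)$ as a ratio of two power series, $w(t) = \sum_{m\ge1} a_m t^{m-1}/m! \,\big/ \sum_{m\ge1} t^{m-1}/m!$ with $a_m = m/\bigl(2(m+1)(m+2)\bigr)$, and checks $a_m \le a_1 = 1/12$. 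Your derivation of $(12x+1)^{-1} < R(x) < (12x)^{-1}$ and of the monotonicity of $R$ from the pointwise bounds on $w$ is correct and essentially identical to the paper's.

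However, the lower bound $w(t) > 12^{-1}e^{-t/12}$ --- the heart of the lemma --- is not proved: it remains a plan, and you concede yourself that the tools you name (Taylor comparison, Cauchy--Schwarz on the partial fractions) are not sharp enough to execute it. Moreover, the contradiction scheme is essentially circular. The condition you would need to refute at a candidate minimum, $-w'(t^*)/w(t^*) \ge 1/12$, is in its global form equivalent to the lemma: if $-w'/w < 1/12$ on $(0,\infty)$, then integrating $\bigl(\log(12w)\bigr)' > -1/12$ from $0$, using $12w(0^+) = 1$, already yields $12w(t) > e^{-t/12}$. So the missing ``sharpened Cauchy--Schwarz estimate'' is exactly the content of the lemma rather than a technical afterthought, and the second-order condition $w''(t^*)/w(t^*) \ge 1/144$ buys little, since $w''/w \sim 2/t^2$ decays only polynomially and the margin is, as you note, of order $10\%$ at moderate $t$. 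For comparison, the paper closes this step by a fully elementary, checkable computation: the claim $12w(t) > e^{-t/12}$ is equivalent to $12\sum_{m} a_m t^{m-1}/m! > \bigl(e^{(11/12)t} - e^{-t/12}\bigr)/t$; the right-hand side is majorized coefficientwise by $\sum_m c_m t^{m-1}/m!$ with $c_m := (11/12)^m + (1/12)^m$; and one verifies $12a_m \ge c_m$ for all $m \ge 2$ (with $12a_1 = c_1 = 1$) via the bound $c_m \le (61/72)(11/12)^{m-2}$ and a ratio argument showing $\min_{m\ge2} 12(12/11)^{m-2}a_m$ is attained at $m=8$, where it is at least $0.89 > 61/72$. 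Without an argument of this kind, the main assertion of the lemma is left unestablished in your proposal.
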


Since $n! = \Gamma(n+1)$, Lemma~\ref{lem:Binet.Stirling} implies a slight improvement of the Stirling approximation by \cite{Robbins_1955}: For arbitrary integers $n \ge 0$,
\begin{equation}
\label{eq:Stirling.Robbins.new}
	\log(n!) \ = \ \log \sqrt{2\pi} + (n + 1/2) \log(n + 1) - n - 1 + s_n
\end{equation}
with
\[
	\frac{1}{12(n+1) + 1} \ < \ s_n \ < \ \frac{1}{12(n+1)} .
\]
In addition, Binet's formula \eqref{eq:Binet} and Lemma~\ref{lem:Binet.Stirling} lead to useful inequalities for the increments of $h(\cdot)$.

\begin{Corollary}
\label{cor:log.Gamma.increments}
For arbitrary $0 < a < b$,
\[
	h(b) - h(a) \ = \ (b - 1/2) \log(b) - (a - 1/2) \log(a) - (b - a) + s(a,b)
\]
where
\[
	- \frac{b-a}{12ab}
		\ < \ s(a,b)
			\ < \ \min \Bigl( 0, - \frac{b-1}{12ab} + \frac{1}{12^2 a^2} \Bigr)
\]
\end{Corollary}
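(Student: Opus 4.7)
My plan is to derive the corollary essentially as a direct consequence of Binet's formula \eqref{eq:Binet} together with Lemma~\ref{lem:Binet.Stirling}. Applying \eqref{eq:Binet} at both $a$ and $b$ and subtracting gives
\[
	h(b) - h(a) \ = \ (b - 1/2)\log b - (a - 1/2)\log a - (b - a) + \bigl( R(b) - R(a) \bigr),
\]
so the statement reduces to showing that $s(a,b) := R(b) - R(a)$ satisfies the three claimed bounds. The bound $s(a,b) < 0$ is immediate from the monotonicity of $R$ asserted in Lemma~\ref{lem:Binet.Stirling}.

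For the lower bound, I would use the integral representation
\[
	s(a,b) \ = \ \int_0^\infty \bigl( e^{-tb} - e^{-ta} \bigr) w(t) \, dt,
\]
noting that the factor $e^{-tb} - e^{-ta}$ is strictly negative for $t > 0$. Applying the upper bound $w(t) < 1/12$ from Lemma~\ref{lem:Binet.Stirling} inside this negative integrand therefore produces a \emph{lower} bound on $s(a,b)$:
\[
	s(a,b) \ > \ \frac{1}{12} \int_0^\infty \bigl( e^{-tb} - e^{-ta} \bigr) dt
	\ = \ \frac{1}{12} \Bigl( \frac{1}{b} - \frac{1}{a} \Bigr)
	\ = \ - \frac{b - a}{12 a b} .
\]

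For the refined upper bound, I would combine the two-sided bounds $R(b) < 1/(12b)$ and $R(a) > 1/(12a+1)$ from Lemma~\ref{lem:Binet.Stirling}. The only small additional step is to convert the latter into a form comparable with $1/(12a)$: from the elementary inequality $1/(1+x) > 1 - x$ for $x > 0$ applied with $x = 1/(12a)$, one gets
\[
	R(a) \ > \ \frac{1}{12a+1} \ = \ \frac{1}{12a} \cdot \frac{1}{1 + 1/(12a)}
	\ > \ \frac{1}{12a} - \frac{1}{144 a^2} .
\]
Subtracting then yields
\[
	s(a,b) \ = \ R(b) - R(a) \ < \ \frac{1}{12 b} - \frac{1}{12 a} + \frac{1}{144 a^2}
	\ = \ - \frac{b - a}{12 a b} + \frac{1}{12^2 a^2},
\]
which, together with $s(a,b) < 0$, gives the stated $\min$. (I suspect the ``$b-1$'' in the displayed upper bound is a typographical slip for ``$b-a$'', since this is exactly the form used, with $a = n-k+1$ and $b = n$, in the proof of Theorem~\ref{thm:BinPoiss1}.) There is no real obstacle here; the only delicate point is choosing the direction of each inequality correctly given the signs of the integrand and of $s(a,b)$ itself.
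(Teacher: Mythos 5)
Your proposal is correct and takes essentially the same route as the paper: both reduce the claim via Binet's formula to $s(a,b) = R(b) - R(a)$ and bound this using the estimates on $w$ (equivalently, on $R$) from Lemma~\ref{lem:Binet.Stirling}, your assembly of the refined upper bound from the endpoint bounds $R(b) < 1/(12b)$ and $R(a) > 1/(12a+1)$ being an inessential repackaging of the paper's direct integral estimate $R(a)-R(b) > \frac{1}{12a+1} - \frac{1}{12b+1}$, with the same final bound $-\frac{b-a}{12ab} + \frac{1}{12^2 a^2}$. You are also right that the ``$b-1$'' in the stated upper bound is a typographical slip for ``$b-a$'': the paper's own proof establishes $-(b-a)/(12ab) + 1/(12^2 a^2)$, and the application in the proof of Theorem~\ref{thm:BinPoiss1} uses it with $b - a = k-1$.
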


\begin{proof}[\bf Proof of Lemma~\ref{lem:Binet.Stirling}]
The series expansion of the exponential function and some elementary algebra lead to the representation
\begin{align*}
	w(t) \
	&= \ \frac{1}{t} \Bigl( \frac{1}{2} - \frac{1}{t} + \frac{1}{e^t - 1} \Bigr) \\
	&= \ \frac{t(e^t - 1) - 2(e^t - 1 - t)}{2t^3} \Big/ \frac{e^t - 1}{t} \\
	&= \ \sum_{m=1}^\infty \frac{a_m t^{m-1}}{m!}
		\Big/ \sum_{m=1}^\infty \frac{t^{m-1}}{m!} ,
\end{align*}
with
\[
	a_m \ := \ \frac{m}{2(m+1)(m+2)} .
\]
Note that $a_1 = 12^{-1}$, and
\[
	\frac{a_{m+1}}{a_m}
	\ = \ \frac{(m+1)^2}{(m+1)^2 + m - 1}
	\ \begin{cases}
		= \ 1 & \text{for} \ m = 1 , \\
		< \ 1 &\text{for} \ m \ge 2 .
	\end{cases}
\]
This shows that $a_m \le 12^{-1}$ with strict inequality for $m \ge 3$. Consequently, $w(t) < 12^{-1}$.

The reverse inequality, $w(t) > 12^{-1} e^{-t/12}$, is equivalent to
\[
	12 \, \frac{t(e^t - 1) - 2(e^t - 1 - t)}{2t^3}
	\ > \ \frac{e^t - 1}{t} e^{-t/12} .
\]
The left hand side equals $12 \sum_{m=1}^\infty a_m t^{m-1}/m!$, while the right hand side equals
\begin{align*}
	\frac{e^{(11/12)t} - e^{-t/12}}{t} \
	&= \ \sum_{m=1}^\infty \frac{\bigl( (11/12)^m - (-1/12)^m \bigr) t^{m-1}}{m!} \\
	&< \ \sum_{m=1}^\infty \frac{c_m t^{m-1}}{m!}
		\quad\text{with} \ c_m := (11/12)^m + (1/12)^m .
\end{align*}
Note that $12 a_1 = 1 = c_1$. Consequently, $w(t) > 12^{-1} e^{-t/12}$ for all $t > 0$, provided that $12 a_m \ge c_m$ for all $m \ge 2$. But $c_2 = 61/72$ and $c_{m+1}/c_m < 11/12$, whence $c_m \le (61/72) (11/12)^{m-2}$ for $m \ge 2$. Consequently, it suffices to show that
\[
	12 (12/11)^{m-2} a_m \ \ge \ 61/72 \quad\text{for all} \ m \ge 2 .
\]
But
\[
	\frac{(12/11)^{m+1 - 2} a_{m+1}}{(12/11)^{m-2} a_m}
	\ = \ \frac{1 + 1/11}{1 + (m-1)/(m+1)^2}
	\ < \ 1
\]
if and only if $m^2 - 9m < -12$, and for integers $m \ge 2$ this is equivalent to $m \le 7$. Hence
\[
	\min_{m \ge 2} \, 12 (12/11)^{m-2} a_m \ = \ 12 (12/11)^{8-2} a_8 \ \ge \ 0.89
	\ > \ 0.85 \ \ge \ 61/72 .
\]

Since for any fixed $t > 0$, the integrand $e^{-tx} w(t)$ is strictly decreasing in $x > 0$, the remainder $R(x)$ is strictly decreasing in $x > 0$. The two bounds for $w(t)$ imply that $R(x)$ is larger than $12^{-1} \int_0^\infty e^{-t(x + 1/12)} \, dt = (12x + 1)^{-1}$ and smaller than $12^{-1} \int_0^\infty e^{-tx} \, dt = (12 x)^{-1}$.
\end{proof}

\begin{proof}[\bf Proof of Corollary~\ref{cor:log.Gamma.increments}]
Writing $h(x) = \log \sqrt{2\pi} + \tilde{h}(x) + R(x)$ with the auxiliary function $\tilde{h}(x) := (x - 1/2) \log x - x$, the remainder term $s(a,b)$ equals $R(b) - R(a)$. But
\[
	R(a) - R(b) \ = \ \int_0^\infty (e^{-ta} - e^{-tb}) w(t) \, dt ,
\]
and since $e^{-ta} - e^{-tb} > 0$, it follows from $0 < w(t) < 12^{-1}$ that
\[
	0 \ < \ R(a) - R(b) \ < \ \frac{1}{12} \int_0^\infty (e^{-ta} - e^{-tb}) \, dt
	\ = \ \frac{1}{12a} + \frac{1}{12 b} \ = \ \frac{b-a}{12ab} .
\]
Moreover, since $w(t) > 12^{-1} e^{-t/12}$,
\begin{align*}
	R(a) - R(b) \
	&> \ \frac{1}{12} \int_0^\infty (e^{-t(a+1/12)} - e^{-t(b+1/12)}) \, dt
		\ = \ \frac{1}{12a + 1} - \frac{1}{12b+1} \\
	&= \ \frac{b-a}{12ab} + \frac{1}{12b(12b+1)} - \frac{1}{12a(12a+1)}
		\ > \ \frac{b-a}{12ab} - \frac{1}{12^2 a^2} .
\end{align*}\\[-5ex]
\end{proof}

\paragraph{Special increments of $h$.}
In connection with student distributions, we need lower and upper bounds for the quantities $h(x + 1/2) - h(x) - \log(x)/2$. With a random variable $Y_x \sim \mathrm{Gamma}(x,1)$, the latter expression equals $\log \Ex \sqrt{Y_x/x}$, so it follows from Jensen's inequality that $h(x+1/2) - h(x) - \log(x)/2 < \log \sqrt{\Ex(Y_x/x)} = 0$. The next lemma shows that $h(x + 1/2) - h(x) - \log(x)/2$ is close to to $- 1/(8x)$ for large $x$.

\begin{Lemma}
\label{lem:Ex.sqrt.Gamma}
For arbitrary $x > 0$,
\[
	- \frac{1}{8x} - \frac{1}{24 x (4x^2 - 1)_+}
	\ < \ h(x + 1/2) - h(x) - \frac{\log x}{2}
	\ < \ - \frac{1}{8(x + 1/2)} .
\]
\end{Lemma}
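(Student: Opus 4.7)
The plan is to reduce the lemma to elementary inequalities via Binet's integral formula \eqref{eq:Binet}, which gives
\[
	h(x+1/2) - h(x) - \frac{\log x}{2} \ = \ A(x) + B(x),
\]
where $A(x) := x\log(1+1/(2x)) - 1/2$ and $B(x) := R(x+1/2) - R(x)$. For the upper bound, Lemma~\ref{lem:Binet.Stirling} yields that $R$ is strictly decreasing, so $B(x) < 0$, and it suffices to prove $A(x) < -1/(8(x+1/2))$. Setting $u := 1/(2x) > 0$, this reduces to $f(u) := 2u + u^2 - 2(1+u)\log(1+u) > 0$, which follows from $f(0) = 0$ together with $f'(u) = 2u - 2\log(1+u) > 0$ (since $\log(1+u) < u$ for $u > 0$).

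For the lower bound, note first that the statement is vacuous when $x \le 1/2$, as then $(4x^2 - 1)_+ = 0$. For $x > 1/2$, I would expand
\[
	A(x) \ = \ \sum_{k=1}^\infty \frac{(-1)^k}{2(k+1)(2x)^k}
	\ = \ -\frac{1}{8x} + \frac{1}{24 x^2} - \frac{1}{64 x^3} + \cdots;
\]
the summands' absolute values decrease strictly once $x > 1/2$, so the alternating series test gives $A(x) > -1/(8x) + 1/(24x^2) - 1/(64x^3)$. To bound $B(x)$ from below, I would use $w(t) < 1/12$ in the representation $B(x) = -\int_0^\infty e^{-tx}(1 - e^{-t/2}) w(t)\, dt$, which yields $B(x) > -1/(24x(x+1/2))$. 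Using the algebraic identity $\frac{1}{24x^2} - \frac{1}{24x(x+1/2)} = \frac{1}{24x^2(2x+1)}$, the claimed lower bound reduces to
\[
	\frac{3x - 1}{4x^2 - 1} \ > \ \frac{3}{8x},
\]
equivalently $12 x^2 - 8 x + 3 > 0$, which holds for all real $x$ since the discriminant $64 - 144$ is negative.

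The main obstacle is matching the precise shape of the lower bound: the individually easy estimates $A(x) > -1/(8x)$ and $B(x) > -1/(24x(x+1/2))$ are jointly \emph{not} strong enough, so one is forced to retain the $1/(24x^2)$ and $-1/(64x^3)$ corrections in the expansion of $A$ and arrange a careful cancellation with the contribution from $B$. Once these corrections are kept, everything collapses into the single quadratic positivity $12x^2 - 8x + 3 > 0$, and the required bounds follow.
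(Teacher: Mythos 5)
Your proposal is correct, and it takes a genuinely different route from the paper's. The paper proves the lemma through the trigamma function: it invokes Gauss' formula $h''(x) = \sum_{n\ge 0}(x+n)^{-2}$, deduces that $h''$ is convex with $h''(z) > 1/z$, writes second-order differences $h(z+a)+h(z-a)-2h(z)$ as $\int_{\R}(a-|t|)_+\,h''(z+t)\,dt$, and then compares the two triangular kernels via the signed function $\Delta(t) = \tfrac18(1-|t|)_+ - \tfrac12(1/2-|t|)_+$, whose vanishing zeroth and first moments combined with convexity of $h''$ yield $\int_{\R}\Delta(t)\,h''(x+1/2+t)\,dt \ge 0$; the $(4x^2-1)_+$ term then emerges from expanding $\tfrac18\log\bigl(\tfrac{x+1/2}{x-1/2}\bigr)$. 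You instead recycle Binet's formula together with the bounds $12^{-1}e^{-t/12} < w(t) < 12^{-1}$ of Lemma~\ref{lem:Binet.Stirling}, which the paper proves anyway for Corollary~\ref{cor:log.Gamma.increments}. I checked your steps: the decomposition $h(x+1/2)-h(x)-\tfrac{\log x}{2} = A(x)+B(x)$ is an exact identity; $f(u) = 2u+u^2-2(1+u)\log(1+u) > 0$ is indeed equivalent to $A(x) < -1/(8(x+1/2))$ after clearing the factor $4u(1+u)$, and strict monotonicity of $R$ gives $B(x)<0$, which settles the upper bound for all $x>0$; for $x > 1/2$ the ratio of consecutive terms in your alternating series is $(k+1)u/(k+2) < u < 1$ with $u=1/(2x)$, so the Leibniz bracketing $A(x) > -\tfrac{1}{8x}+\tfrac{1}{24x^2}-\tfrac{1}{64x^3}$ is legitimate; the bound $w < 1/12$ does give $B(x) > -\tfrac{1}{24x(x+1/2)}$; and the final reduction is exact algebra ending in $12x^2-8x+3>0$ (discriminant $-80$). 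What your approach buys is economy and elementarity: it avoids the moment-matching kernel trick entirely, leans only on machinery already present in the paper, and in fact establishes the slightly stronger lower bound $-\tfrac{1}{8x}+\tfrac{1}{24x^2(2x+1)}-\tfrac{1}{64x^3}$. What the paper's approach buys is structural insight: Jensen's inequality at the midpoint explains why the constant $1/8$ is forced, and the $(4x^2-1)_+$ shape arises naturally rather than, as in your version, only after a discriminant computation engineered to make the pieces cancel.
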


\begin{proof}[\bf Proof of Lemma~\ref{lem:Ex.sqrt.Gamma}]
Let us first mention that the second derivative of the log-gamma function $h$ is given by Gauss' formula
\[
	h''(x) \ = \ \sum_{n=0}^\infty \frac{1}{(x+n)^2} ,
\]
see Chapter~12 of \cite{Whittaker_Watson_1996}. In particular, $h''$ is strictly convex and decreasing on $(0,\infty)$ with
\[
	h''(x) \ > \ \int_x^\infty \frac{1}{y^2} \, dy \ = \ \frac{1}{x} ,
\]
because $(x+n)^{-2} > \int_{x+n}^{x+n+1} y^{-2} \, dy$.

Now we start with a general consideration about second order differences of $h$: For arbitrary $0 < a < z$,
\begin{align*}
	h(z+a) + h(z-a) - 2 h(z) \
	&= \ \bigl( h(z+a) - h(z) \bigr) - \bigl(h(z) - h(z-a)  \bigr) \\
	&= \ \int_0^a \bigl( h'(z+u) - h'(z-a+u) \bigr) \, du \\
	&= \ \int_0^a \int_0^a h''(z-a+u+v) \, dv \, du \\
	&= \ a^2 \Ex h''(z-a + a(U+V)) ,
\end{align*}
where $U$ and $V$ are independent random variables with uniform distribution on $[0,1]$. Since $h''$ is convex and $h''(z) > 1/z$, it follows from Jensen's inequality that
\[
	h(z+a) + h(z-a) - 2 h(z)
	\ \ge \ a^2 h''(z-a + a \Ex(U+V))
	\ = \ a^2 h''(z) \ > \ \frac{a^2}{z} .
\]
Note also that the distribution of $W := U+V$ is given by the triangular density $f(w) := (1 - |w-1|)_+$, so
\begin{align*}
	h(z+a) + h(z-a) - 2 h(z) \
	&= \ a^2 \int_{\R} (1 - |w-1|)_+ h''(z-a + aw) \, dw \\
	&= \ \int_{\R} (a - |a(w-1)|)_+ h''(z + a(w-1)) \, a \, dw \\
	&= \ \int_{\R} (a - |t|)_+ h''(z+t) \, dt .
\end{align*}

We first apply these findings with $z = x + 1/2$ and $a = 1/2$: Since $h(x+1) - h(x) = \log x$,
\begin{align*}
	\frac{\log x}{2} - \bigl( h(x + 1/2) - h(x) \bigr) \
	&= \ \frac{h(x+1) - h(x)}{2} - h(x+1/2) + h(x) \\
	&= \ \frac{1}{2} \bigl( h(x+1) + h(x) - 2 h(x + 1/2) \bigr) \\
	&\ge \ \frac{1}{8(x + 1/2)} ,
\end{align*}
which gives us the upper bound for $h(x+1/2) - h(x) - \log(x)/2$. Furthermore,
\[
	\frac{\log x}{2} - \bigl( h(x + 1/2) - h(x) \bigr) \
	\ = \ \frac{1}{2} \int_{\R} (1/2 - |t|)_+ h''(x + 1/2 + t) \, dt .
\]
On the other hand, if $x > 1/2$, then with $z = x+1/2$ and $a = 1$ we obtain
\begin{align*}
	\log \Bigl( \frac{x + 1/2}{x - 1/2} \bigr) \
	&= \ \bigl( h(x + 3/2) - h(x + 1/2) \bigr)
		- \bigl( h(x+1/2) - h(x-1/2) \bigr) \\
	&= \ \int_{\R} (1 - |t|)_+ h''(x + 1/2 + t) \, dt .
\end{align*}
Note that
\[
	\Delta(t) \ := \ \frac{1}{8} (1 - |t|)_+ - \frac{1}{2} (1/2 - |t|)_+
\]
has the following properties:
\[
	\int_{\R} \Delta(t) \, dt \ = \ \int_{\R} \Delta(t) t \, dt \ = \ 0
\]
and
\[
	\Delta(t) \ \begin{cases}
		< 0 & \text{if} \ |t| < 1/3 , \\
		\ge 0 & \text{if} \ |t| \ge 1/3 .
	\end{cases}
\]
These properties plus the convexity of $h''$ imply that
\[
	\int_{\R} \Delta(t) h''(x + 1/2 + t) \, dt \ \ge \ 0 .
\]
Indeed, the latter integral doesn't change if we replace $h''(x + 1/2 + t)$ with $g(t) := h''(x + 1/2 + t) + a + bt$ with constants $a, b$ such that $g(\pm 1/3) = 0$. But then, by convexity of $g$ and the sign changes of $\Delta$, we have that $g \Delta \ge 0$. Consequently,
\begin{align*}
	\frac{\log x}{2} - \bigl( h(x + 1/2) - h(x) \bigr) \
	&= \ \frac{1}{2} \int_{\R} (1/2 - |t|)_+ h''(x + 1/2 + t) \, dt \\
	&\le \ \frac{1}{8} \int_{\R} (1 - |t|)_+ h''(x + 1/2 + t) \, dt \\
	&= \ \frac{1}{8} \log \Bigl( \frac{x + 1/2}{x - 1/2} \Bigr) .
\end{align*}
Finally, with $y := (2x)^{-1} < 1$, the latter expression equals
\begin{align*}
	\frac{1}{8} \log \Bigl( \frac{1 + y}{1 - y} \Bigr)
		\ = \ \frac{1}{4} \sum_{\ell=0}^\infty \frac{y^{2\ell+1}}{2\ell + 1}
	&= \ \frac{y}{4}
		+ \frac{1}{4} \sum_{\ell=1}^\infty \frac{y^{2\ell + 1}}{2\ell + 1} \\
	&< \ \frac{y}{4} + \frac{y^3}{12 (1 - y^2)} \\
	&= \ \frac{1}{8x} + \frac{1}{24x (4x^2 - 1)} .
\end{align*}\\[-7ex]
\end{proof}

\paragraph{Acknowledgements.}
Constructive comments of David Ginsbourger, Dominic Schuhmacher and Kaspar Stucki on an 
early version of this paper are gratefully acknowledged. We also thank Lutz Mattner for drawing our attention to the technical report \cite{Christensen_Fischer_Kvols_1995} and for pointing out the connection between the ratio measure and the mixture index of fit. Constructive comments of a referee led to further improvements such as Proposition~\ref{prop:rho.and.divergences}.

\bibliographystyle{ims}
\bibliography{BinPoissApprox}

\end{document}